\title{Weibel's conjecture for twisted K-theory}
\author{Joel Stapleton}
\date{\today}
\newtheorem{theorem}{Theorem}[section]
\newtheorem{lemma}[theorem]{Lemma}
\newtheorem{proposition}[theorem]{Proposition}
\newtheorem{corollary}[theorem]{Corollary}
\newcommand{\ZZ}{\mathbb{Z}}
\newcommand{\Einf}{\mathbb{E}_\infty}
\newcommand{\Eone}{\mathbb{E}_1}
\newcommand{\mm}{\mathfrak{m}}
\newcommand{\aff}{\mathbb{A}}    
\newcommand{\proj}{\mathbb{P}}
\newcommand{\cat}[1]{\tn{\textbf{#1}}}
\newcommand{\Mod}{\textnormal{Mod}}
\newcommand{\Perf}{\textnormal{Perf}}
\newcommand{\PrLstomega}{\textnormal{Pr}^{\textnormal{L}}_{\textnormal{st}, \omega}}
\newcommand{\Catperf}{\textnormal{Cat}^{\textnormal{perf}}_{\infty}}
\newcommand{\Spectra}{\textnormal{Sp}}
\newcommand{\cofib}{\textnormal{cofib}}
\newcommand{\fib}{\textnormal{fib}}
\newcommand{\etale}{\'etale}
\newcommand{\torsheaf}{\mathscr{T}or}
\newcommand{\OO}{\mathscr{O}}
\newcommand{\Spec}{\textnormal{Spec}\,}
\newcommand{\coker}{\textnormal{coker}\,}
\newcommand{\surj}{\twoheadrightarrow}
\newcommand{\inj}{\hookrightarrow}
\newcommand{\tn}[1]{\textnormal{#1}}
\newcommand{\hmtpy}{\simeq}
\newcommand{\sheaffont}[1]{\mathcal{#1}}
\newcommand{\pitor}{\pi_1} 
\newcommand{\piaff}{\pi_2} 
\newcommand{\low}{1} 
\newcommand{\azu}{\sheaffont{A}}
\providecommand{\customgenericname}{}
\newcommand{\newcustomtheorem}[2]{%
  \newenvironment{#1}[1]
  {%
   \renewcommand\customgenericname{#2}%
   \renewcommand\theinnercustomgeneric{##1}%
   \innercustomgeneric
  }
  {\endinnercustomgeneric}
}
\theoremstyle{definition}
\theoremstyle{definition}
\newtheorem{rem}[theorem]{Remark}
\theoremstyle{definition}
\newtheorem{defin}[theorem]{Definition}
\newcommand{\colim@}[2]{%
  \vtop{\m@th\ialign{##\cr
    \hfil$#1\operator@font colim$\hfil\cr
    \noalign{\nointerlineskip\kern1.5\ex@}#2\cr
    \noalign{\nointerlineskip\kern-\ex@}\cr}}%
}
\newcommand{\colim}{%
  \mathop{\mathpalette\colim@{\rightarrowfill@\scriptscriptstyle}}\nmlimits@
}
\begin{document}
\maketitle

\begin{abstract}
  \noindent We prove Weibel's conjecture for twisted $K$-theory when twisting by a smooth proper
  connective dg-algebra. Our main contribution is showing we can kill a negative twisted $K$-theory
  class using a projective birational morphism (in the same twisted setting). We extend the vanishing result to relative twisted $K$-theory of a smooth affine
  morphism and describe counter examples to some similar extensions.
\end{abstract}
\section{Introduction}

 The so-called fundamental theorem for $K_1$ and $K_0$ states that for any ring $R$ there
is an exact sequence
\[
  0 \rightarrow K_1(R) \rightarrow K_1(R[t]) \oplus K_1(R[t^{-1}]) \rightarrow K_1(R[t^\pm])
  \rightarrow K_0(R) \rightarrow 0.
\]
We see $K_0$ can be defined using $K_1$. There is an analogous exact sequence, truncated on the right,
for $K_0$. Bass defines $K_{-1}(X)$ as the cokernel of the final morphism. He then iterates the construction to
define a theory of negative K-groups \cite[Sections XII.7 and XII.8]{bass_algebraic_k-theory}.

Weibel's conjecture, originally posed in \cite{weibel_conjecture}, asks if $K_{-i}(R) = 0$
for $i > \dim R$ when $R$ has finite Krull dimension. Kerz--Strunk--Tamme \cite{kerz_strunk_tamme} have proven Weibel's conjecture
for any Noetherian scheme of finite Krull dimension (see the introduction for a historical summary of progress) by establishing pro
cdh-descent for algebraic $K$-theory. 
Land--Tamme \cite{land_tamme} have shown that a general class of localizing invariants
satisfy pro cdh-descent. With this improvement, we extend Weibel's vanishing to some cases of twisted
$K$-theory. 

\begin{theorem}\label{main_theorem}
  Let $X$ be a Noetherian $d$-dimensional scheme and
  $\azu$ a sheaf of smooth proper connective quasi-coherent differential graded algebras over $X$, then
  $K_{-i}(\Perf(\azu))$ vanishes for $i > d$.
\end{theorem}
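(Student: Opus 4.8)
The plan is to run the argument of Kerz--Strunk--Tamme \cite{kerz_strunk_tamme} in the twisted setting, now with the pro cdh-descent for localizing invariants of Land--Tamme \cite{land_tamme} as input. For a morphism $f\colon Y \to X$ let $f^{*}\azu$ denote the derived pull-back; it is again a smooth proper connective quasi-coherent dg-algebra over $Y$, and because $\azu$ is smooth and proper the category $\Perf(\azu)$ is a dualizable $\Perf(\OO_{X})$-module, so base change holds: $\Perf((fg)^{*}\azu) \simeq \Perf(\OO_{Y'}) \otimes_{\Perf(\OO_{Y})} \Perf(f^{*}\azu)$ for every $g\colon Y' \to Y$. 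Hence $\mathcal F(Y) := K(\Perf(f^{*}\azu))$ is a finitary localizing invariant of $X$-schemes: a closed--open decomposition $Z \hookrightarrow Y \hookleftarrow Y \setminus Z$ is sent to a fibre sequence, since $K$ is localizing and $-\otimes_{\Perf(\OO_{X})}\Perf(\azu)$ preserves Verdier sequences. By \cite{land_tamme} the invariant $\mathcal F$ therefore satisfies pro cdh-descent, and so do its cdh-sheafification $\mathcal F^{\mathrm{cdh}} := \mathbb{L}_{\mathrm{cdh}}\mathcal F$ and the fibre $\mathcal G := \fib(\mathcal F \to \mathcal F^{\mathrm{cdh}})$, which moreover has $\mathbb{L}_{\mathrm{cdh}}\mathcal G \simeq 0$.

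\textbf{Connectivity inputs and cdh-local vanishing.} Over a field $k$, smoothness of $\azu$ forces every $k$-perfect $\azu$-module to be $\azu$-perfect and properness forces the converse, so $\Perf(\azu)$ is the subcategory of $\azu$-modules that are perfect over $k$; since $\azu$ is connective this inherits a bounded $t$-structure whose heart is the (Noetherian) category of finite-dimensional $\pi_{0}\azu$-modules, and Barwick's theorem of the heart gives that $K(\Perf(\azu))$ is connective. The identical argument over a regular Noetherian base, together with $A^{1}$-invariance of $G$-theory, shows that $\mathcal F$ is connective on regular Noetherian $X$-schemes and that $\mathcal F \to \mathcal F^{\mathrm{cdh}}$ is an equivalence there; in particular $\mathcal G$ vanishes on regular Noetherian $X$-schemes. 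Feeding the connectivity over fields into the descent spectral sequence for $\mathcal F^{\mathrm{cdh}}$ and using $\mathrm{cd}_{\mathrm{cdh}}(Y) \le \dim Y$ yields $\pi_{-i}\mathcal F^{\mathrm{cdh}}(Y) = 0$ for all $i > \dim Y$ and all finite-dimensional Noetherian $X$-schemes $Y$.

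\textbf{Induction on dimension.} By the long exact sequence of $\mathcal G \to \mathcal F \to \mathcal F^{\mathrm{cdh}}$ and the previous paragraph, Theorem~\ref{main_theorem} follows once $\pi_{-i}\mathcal G(Y) = 0$ for $i > \dim Y$, which I would prove by induction on $d = \dim Y$, mimicking the deduction of \cite{kerz_strunk_tamme}. In the base case $d = 0$ one has $\mathcal G(Y) \simeq \fib\big(K(\Perf(f^{*}\azu)) \to K(\Perf((f^{*}\azu)|_{Y_{\mathrm{red}}}))\big)$ since cdh-sheaves are nil-invariant, and I would show its negative homotopy vanishes using the theorem of Dundas--Goodwillie--McCarthy to pass first to $\pi_{0}f^{*}\azu$ and then to its semisimple quotient, where negative $K$-theory vanishes, together with idempotent lifting for surjectivity on $K_{0}$. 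For the inductive step, choose by Noetherian induction (as in loc.\ cit.) an abstract blowup square for $Y$ with centre $Z$ and exceptional locus $E$ of dimension $< d$; pro cdh-descent for $\mathcal F$ makes it pro-cartesian, and combined with $\mathbb{L}_{\mathrm{cdh}}\mathcal G \simeq 0$ and the inductive hypothesis applied to the (same-dimensional) infinitesimal thickenings of $Z$ and $E$, the claim on $Y$ is reduced to killing the pull-back of a given negative class along a projective birational modification.

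\textbf{The main obstacle.} The genuinely new ingredient --- the heart of the matter, as the abstract indicates --- is the twisted analogue of the Kerz--Strunk--Tamme geometric lemma: given $\alpha \in K_{-i}(\Perf(\azu))$ with $i \ge 1$, there is a projective birational morphism $\pi\colon \widetilde X \to X$ with $\pi^{*}\alpha = 0$ in $K_{-i}(\Perf(\pi^{*}\azu))$. The difficulty is to propagate the sheaf of dg-algebras through the blowup while keeping control of the push-forward $\Perf(\pi^{*}\azu) \to \Perf(\azu)$ and of the twisted $K$-theory of the exceptional divisor, all without losing the smoothness and properness that underlie the base-change and connectivity statements used above; what makes the induction close is that $\pi$ is an isomorphism away from a closed subset of dimension $< \dim X$, so $\azu$ is unaffected there and the new content lives entirely in lower dimension. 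Granting the theorem, the extension to the relative twisted $K$-theory of a smooth affine morphism should follow by combining it with the $A^{1}$- and étale-local structure of such morphisms.
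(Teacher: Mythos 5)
Your overall architecture matches the paper's: reduce to reduced Noetherian affine schemes, induct on dimension using pro cdh-descent for the twisted localizing invariant (Land--Tamme), handle the base case with a bounded $t$-structure and the theorem of the heart, and reduce the inductive step to killing a negative class along a projective birational morphism. (The paper runs the induction directly on the pro-group Mayer--Vietoris sequence of an abstract blow-up square rather than through the cdh-sheafification $\mathcal F^{\mathrm{cdh}}$ and the fibre $\mathcal G$; your route would additionally require knowing that twisted $K$-theory agrees with its cdh-sheafification on regular $X$-schemes, which you assert via ``$\aff^1$-invariance of $G$-theory'' but which is not established in the twisted setting and is not needed in the paper's version of the argument.)

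The genuine gap is the one you yourself flag: the construction of a projective birational morphism $\rho\colon\tilde X\to X$ with $\rho^*\gamma=0$. This is not a step one may grant --- it is the paper's stated main contribution (Proposition \ref{platification}), and your paragraph on ``the main obstacle'' describes the difficulty without resolving it. The actual mechanism is: (i) Bass's delooping represents $\gamma\in K^{\azu}_{-i}(X)$ by the class of a twisted perfect complex $P_\bullet$ on $\mathbb{G}_{m,X}^i$, which dies if $P_\bullet$ extends to a perfect complex on $\aff^i_X$; (ii) the decisive input making the twisted case tractable is To\"en--Vaqui\'e's Lemma \ref{perfection_transference} --- for $\azu$ smooth and proper, perfection over $\azu$ is equivalent to perfection over $\OO_X$ --- which lets one apply Raynaud--Gruson platification to the underlying $\OO$-modules of the homology of $P_\bullet$ and then transfer perfection back to the twisted side; (iii) connectivity and properness of $\azu$ are used in Lemma \ref{twisted_extension} to extend a discrete twisted module from $\mathbb{G}_{m,X}^i$ to $\aff^i_X$ by reducing to $H_0(\azu)$; and (iv) an induction on the length of the homology of $P_\bullet$, controlled by Lemma 6.5 of Kerz--Strunk--Tamme, closes the argument. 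Your remark that the induction closes because $\rho$ is an isomorphism away from a closed subset of smaller dimension does not engage with this: the class $\gamma$ lives on all of $X$, and the blow-up must be chosen so that the pulled-back representative extends across $\aff^i_{\tilde X}$, which is exactly what (i)--(iv) accomplish. Without this construction the theorem is not proved.
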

The original goal of this paper was to extend Weibel's conjecture to an Azumaya algebra over a
scheme. To an Azumaya algebra $\azu$ of rank $r^2$ on $X$ we can associate a Severi-Brauer variety
$P$ of relative dimension $r-1$ over
$X$. Such a variety is \etale-locally isomorphic over $X$ to $\proj^{r-1}_X$. In Quillen's work
\cite{quillen_higher_algebraic_k-theory}, he generalizes the projective bundle formula to
Severi-Brauer varities showing (for $i \geq 0$)
\[
  K_i(P) \cong \bigoplus_{n=0}^{r-1} K_i(\azu^{\otimes n}).
\]
At the root of this computation is a semi-orthogonal decomposition of $\Perf(P)$. Consequently, the computation lifts to the level of nonconnective
$K$-theory spectra. Statements about the $K$-theory of
Azumaya algebras can generally be extracted through this decomposition. In our case, the dimension
of the Severi-Brauer variety jumps and so Weibel's conjecture (for our noncommutative dg-algebra) does not follow from the commutative setting.

We could remedy this by characterizing a class of morphisms to $X$, which should include Severi-Brauer
varieties, and then show the relative $K$-theory vanishes under $-d-1$. In Remark
\ref{counter_example}, we show that smooth and proper morphisms (in fact, smooth and projective) are
not sufficient. We warn the reader that we will use the overloaded words ``smooth and proper'' in
both the scheme and dg-algebra settings.

For dg-algebras and dg-categories, properness and smoothness are module and algebraic finiteness
conditions, see To\"en--Vaqui\'e \cite[Definition 2.4]{toen_vaquie}. Together, the two conditions
characterize the dualizable objects in $\Mod_{\Mod_R}(\PrLstomega)$, whose objects are
$\omega$-compactly generated $R$-linear stable presentable $\infty$-categories. More surprisingly,
the invertible objects of $\Mod_{\Mod_R}(\PrLstomega)$ are
exactly the module categories over derived Azumaya algebras, see Antieau--Gepner \cite[Theorem 3.15]{antieau_gepner}. So Theorem \ref{main_theorem} recovers the discrete Azumaya algebra case.

However, any connective derived Azumaya algebras is discrete. After base-changing to a field $k$, $\azu_k
\cong H_*\azu_k$ is
a connective graded $k$-algebra and $H_*\azu_k \otimes_k (H_*\azu_k)^{op}$ is Morita equivalent to
$k$. So $H_*\azu_k$ is discrete. The scope of Theorem \ref{main_theorem} is not wasted
as smooth proper connective
dg-algebras can be nondiscrete, see Raedschelders--Stevenson \cite[Section 4.3]{raedschelders_stevenson}.

The proof of Theorem \ref{main_theorem} follows Kerz \cite{kerz}. In Section $2$, we define and study twisted
$K$-theory. We kill a
negative twisted $K$-theory class using a projective birational morphism in Section $3$. Lastly, Section
$4$ holds the main theorem and we consider some extensions.

\textbf{Conventions:}
We make very little use of the language of $\infty$-categories. For a commutative ring $R$, there is
an equivalence of $\infty$-categories between the $\Eone$-ring spectra over $HR$ and differential
graded algebras over $R$ localized at the quasi-isomorphisms (see \cite[7.1.4.6]{lurie_ha}). For a
dg-algebra (or $\Eone$-ring) $\azu$, we can consider the $\infty$-category $\mathrm{RMod}(\azu)$ of spectra which
have a right $\azu$-module structure. We will refer to this $\infty$-category as the derived category of
$\azu$ and denote it by $D(\azu)$. The subcategory $\Perf(\azu)$ consists of all compact objects of
$\mathrm{RMod}(\azu)$, or the right $\azu$-modules which corepresent a functor that commutes with
filtered colimits. We shall refer to objects of $\Perf(\azu)$ as perfect complexes over $\azu$.

We use $K(-)$ undecorated as non-connective algebraic $K$-theory and consider it
as a localizing invariant in the sense of Blumberg--Gepner--Tabuada \cite{blumberg_gepner_tabuada_universal}. In
particular, it is an $\infty$-functor from $\Catperf$, the $\infty$-category of idempotent complete small stable infinity categories
with exact functors, taking values in $\Spectra$, the $\infty$-category of spectra. For $X$ a quasi-compact quasi-separated
scheme, $K(\Perf(X))$ is equivalent to the non-connective $K$-theory spectrum of Thomason--Trobaugh
\cite{thomason_trobaugh}. The $\infty$-category $\Catperf$ has a symmetric monoidal structure which we will denote by $\widehat{\otimes}$. For $R$ an $\Einf$-ring spectrum, $\Perf(R)$ is an $\Einf$
algebra in $\Catperf$. We will restrict the domain of algebraic $K$-theory to $\Mod_{\Perf (R)}(\Catperf)$. 

\textbf{Acknowledgements:} The author is thankful to his advisor, Benjamin Antieau, for the
suggested project, patience, and guidance. He also thanks Maximilien P\'{e}roux for helpful comments on an earlier
draft. The author was partially supported by NSF Grant DMS-1552766 and NSF RTG grant DMS-1246844.

\section{Twisted $K$-theory}
In Grothendieck's original papers \cite{grothendieck_brauer_i} \cite{grothendieck_brauer_ii}
\cite{grothendieck_brauer_iii}, he globalizes the notion of a central simple algebra over a field.
\begin{defin}
  A locally free sheaf of $\OO_X$-algebras $\azu$ is a \textit{sheaf of Azumaya algebras} if it is \'etale-locally isomorphic to
  $\mathcal{M}_n(\OO_X)$ for some $n$.
\end{defin}
An Azumaya algebra is then a $PGL_n$-torsor over the \'etale topos of $X$ and
so, by Giraud, isomorphism classes are in bijection with $H^1_{\tn{\'et}}(X, PGL_n)$. The central extension of sheaves of groups in the \'etale topology
\[
  1 \rightarrow \mathbb{G}_m \rightarrow GL_n \rightarrow PGL_n \rightarrow 1
\]
leads to an exact sequence of nonabelian cohomology
\[
  \begin{tikzcd}
    \cdots \ar[r] & H_{\tn{\'et}}^1(X,
  \mathbb{G}_m) \ar[r] & H_{\tn{\'et}}^1(X, GL_n) \ar[r] & H_{\tn{\'et}}^1(X, PGL_n) \ar[r] & H_{\tn{\'et}}^2(X,
  \mathbb{G}_m).
  \end{tikzcd}
\]
For $d\,|\,n$ we have a morphism of exact sequences
\[
  \begin{tikzcd}
    1 \ar[r] & \mathbb{G}_m \ar[r] & GL_n \ar[r] & PGL_n \ar[r] & 1 \\
    1 \ar[r] & \mathbb{G}_m \ar[u, equal] \ar[r] & GL_d \ar[u] \ar[r] &  PGL_d \ar[u,
    ] \ar[r] & 1
  \end{tikzcd}
\]
with the two right arrows given by block-summing the matrix along the diagonal $n/d$ times. The
Brauer group is the filtered colimit of cofibers
\[
  Br(X) := \colim(\cofib(H_{\tn{\'et}}^1(X, GL_n) \rightarrow H_{\tn{\'et}}^1(X, PGL_n)))
\]
along the partially-ordered set of the natural numbers under division. This is the group of Azumaya algebras modulo Morita equivalence with group operation given by tensor
product (see \cite{grothendieck_brauer_i}). We have an injection $Br(X) \inj H^2_{\tn{\'et}}(X, \mathbb{G}_m)$ and when $X$ is
quasi-compact this injection
factors through the torsion subgroup. We will call $Br'(X) := H^2_{\tn{\'et}}(X, \mathbb{G}_m)_{tor}$ the cohomological
Brauer group. Grothendieck asked if the injection $Br(X) \inj Br'(X)$ is an
isomorphism.

This map is not generally surjective. Edidin--Hassett--Kresch--Vistoli
\cite{edidin_hassett_kresch_vistoli} give a non-separated counter
example by connecting the image of the Brauer group
to quotient stacks. There are two ways to proceed in addressing the question. The first is to provide a class of schemes
for when this holds. In \cite{deJong_gabber}, de Jong publishes a proof of O. Gabber that $Br(X) \cong Br'(X)$ when
$X$ is equipped with an ample line bundle. Along with reproving Gabber's result for affines, Lieblich \cite{lieblich_thesis} shows
that for a regular scheme with dimension less than or equal to $2$ there are isomorphisms $Br(X) \cong Br'(X)
\cong H^2_{\tn{\'et}}(X, \mathbb{G}_m)$.

The second perspective is to enlarge the class of objects considered. The Morita equivalence classes of
$\mathbb{G}_m$-gerbes over the \'etale topos of a scheme $X$ are in bijection with $H^2_{\tn{\'et}}(X,
\mathbb{G}_m)$. In \cite{lieblich_thesis},
Lieblich associates to any Azumaya algebra $\azu$ a $\mathbb{G}_m$-gerbe of Morita-theoretic trivializations. Over an
\'etale open $U \rightarrow X$, the gerbe gives a groupoid of Morita equivalences from
$\azu$ to $\OO_X$. The gerbe of trivializations represents the boundary class
$\delta([\azu]) = \alpha \in H^2_{\tn{\'et}}(X, \mathbb{G}_m)$.

Any class $\alpha \in H^2_{\tn{\'et}}(X, \mathbb{G}_m)$ is realizable 
on a \v{C}ech cover. We can use this data to build a well-defined category of sheaves of
$\OO_X$-modules which ``glue up to
$\alpha$'', see C\u ald\u araru \cite[Chapter 1]{caldararu_thesis}. Let $\Mod_X^\alpha$ denote
the corresponding derived $\infty$-category and $\Perf_X^\alpha$ the full subcategory of compact objects.
$K(\Perf_X^{\alpha})$ is the classical definition of $\alpha$-twisted
algebraic $K$-theory. Determining when the cohomology class $\alpha$ 
is represented by an Azumaya algebra reduces to finding a twisted locally-free sheaf with trivial
determinant on a 
$\mathbb{G}_m$-gerbe associated to $\alpha$ \cite[Section 2.2.2]{lieblich_thesis}.  The endomorphism algebra of the twisted locally-free sheaf gives the Azumaya
algebra and the twisted module represents the tilt $\Mod_X^{\alpha} \simeq \Mod_\azu$. 

Lieblich also compactifies the moduli of Azumaya algebras. This necessarily includes developing a definition of a  derived Azumaya algebra.
\begin{defin}
  A \textit{derived Azumaya algebra} over a commutative ring $R$ is a proper dg-algebra $\azu$ such
  that the natural map of $R$-algebras
  \[
    \azu \otimes_R^\mathbb{L} \azu^{op} \xrightarrow{\simeq} \mathbb{R}Hom_{D(R)}(\azu, \azu)
  \]
  is a quasi-isomorphism.
\end{defin}
After Lieblich, To\"en \cite{toen_azumaya} and (later)
Antieau--Gepner \cite{antieau_gepner} consider the analogous problem posed by Grothendieck in
the dg-algebra and $\Einf$-algebra settings, respectively. Antieau--Gepner construct an \'etale sheaf
$\mathbf{Br}$ in the $\infty$-topos $\mathrm{Shv}^{\tn{\'et}}_R$. For any \'etale sheaf $X$, we can now
associate a Brauer space $\mathbf{Br}(X)$. For $X$ a quasi-compact quasi-separated scheme, they show
$\pi_0(\mathbf{Br}(X)) \cong H^1_{\tn{\'et}}(X, \mathbb{Z}) \times H^2_{\tn{\'et}}(X, \mathbb{G}_m)$
and every such Brauer class is algebraic. Now for any (possibly nontorsion) $\alpha \in H^2_{\acute{e}t}(X,
\mathbb{G}_m)$ there is a derived Azumaya algebra $\azu$ and an equivalence
$\Mod_X^\alpha \simeq \Mod_\azu$ of stable $\infty$-categories.

This reframes classical twisted $K$-theory as $K$-theory with coefficients in a particularly special dg-algebra in $D(X)$. For our purposes, we work
with a generalized definition of twisted $K$-theory which allows ``twisting'' by any dg-algebra.

\begin{defin}
Let $R$ be a commutative ring. For a dg-algebra $\azu$ over $R$, we define the $\azu$-\textit{twisted
  $K$-theory} $K^\azu: \Mod_{\Perf(R)}(\Catperf) \rightarrow \tn{Sp}$ by $K^\azu(\mathcal{C}) := K( \mathcal{C} \widehat{\otimes}_{\Perf(R)} \Perf(\azu))$.
\end{defin}

When the dg-algebra ``$\azu$'' is clear, we just write twisted K-theory. If our input
to $K^\azu$ is an $R$-algebra $S$ then
\[
  K^\azu(S) = K(\Perf(S) \widehat{\otimes}_{\Perf(R)} \Perf(\azu))
  \hmtpy K(\Perf(S\otimes_R \azu)) \hmtpy K(S \otimes_R \azu).
\]
Our definition recovers the historical definition of twisted K-theory when $\azu$ is a derived
Azumaya algebra and we evaluate on the base ring $R$.  The same definition works for a scheme $X$
and $\azu \in \tn{Alg}_{\Eone}(D_{qc}(X))$. We will refer to such an $\azu$ as \textit{a sheaf of
  quasi-coherent dg-algebras over $X$}. By Theorem 9.36 of Blumberg--Gepner--Tabuada
\cite{blumberg_gepner_tabuada_universal}, twisted $K$-theory is a localizing invariant. When $X$ is
a quasi-compact quasi-separated scheme,
Proposition A.15 of Clausen--Mathew--Naumann--Noel \cite{clausen_mathew_naumann_noel} establishes
Nisnevich descent when $X$ is quasi-compact quasi-separated.
\begin{defin}
  A dg-algebra $\azu$ over a ring $R$ is \textit{proper} if it is perfect as a complex over $R$ and \textit{smooth} if
  it is perfect over $\azu^{op} \otimes_{R} \azu$.
\end{defin}
The following is Lemma 2.8 of \cite{toen_vaquie} and is an essential property for our proof in Section 3.
\begin{lemma}\label{perfection_transference}
  Let $\azu$ be a smooth proper dg-algebra over a ring $R$. Then a complex of $D(\azu)$ is perfect over
  $\azu$ if and only if it is perfect as an object of $D(R)$.
\end{lemma}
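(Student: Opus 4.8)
The plan is to treat the two implications separately; they have rather different character. Throughout I would use the elementary fact that for any $\Eone$-ring $S$ the subcategory $\Perf(S)\subseteq D(S)$ is precisely the thick subcategory (idempotent-complete, closed under finite colimits and shifts) generated by the free rank-one module $S$, and that any exact functor $D(S)\to D(T)$ carries thick subcategories into thick subcategories. Hence in each direction it suffices to track what happens to a single free module.

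For the ``only if'' direction only properness is needed. Let $U\colon D(\azu)\to D(R)$ be the forgetful functor, which is exact. Properness says $U(\azu)=\azu\in\Perf(R)$. Therefore $U$ sends the thick subcategory of $D(\azu)$ generated by $\azu$, namely $\Perf(\azu)$, into the thick subcategory of $D(R)$ generated by $\azu$, which is contained in $\Perf(R)$. So every perfect $\azu$-module has perfect underlying $R$-complex.

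For the ``if'' direction I would exploit smoothness through a resolution of the diagonal bimodule. Write $\azu^e:=\azu^{op}\otimes_R\azu$, so that $\azu$-bimodules are right $\azu^e$-modules and $\azu$ itself is the diagonal bimodule; smoothness says exactly that $\azu\in\Perf(\azu^e)$, i.e.\ $\azu$ lies in the thick subcategory of $D(\azu^e)$ generated by the free module $\azu^e=\azu\otimes_R\azu$. Given $M\in D(\azu)$ with $U(M)\in\Perf(R)$, I would apply the exact functor
\[
  M\otimes_\azu(-)\colon D(\azu^e)\longrightarrow D(\azu),
\]
sending a bimodule $N$ to $M\otimes_\azu N$ with right $\azu$-structure coming from the second $\azu$-factor of $\azu^e$. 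Since $M\simeq M\otimes_\azu\azu$, this shows $M$ lies in the thick subcategory of $D(\azu)$ generated by $M\otimes_\azu(\azu\otimes_R\azu)\simeq M\otimes_R\azu$, the last equivalence being the projection formula which absorbs the left $\azu$-factor against $M$. Finally, $U(M)\in\Perf(R)$ means $U(M)$ lies in the thick subcategory of $D(R)$ generated by $R$, so $M\otimes_R\azu$ lies in the thick subcategory of $D(\azu)$ generated by $R\otimes_R\azu=\azu$; that is, $M\otimes_R\azu\in\Perf(\azu)$. Combining the two displayed memberships gives $M\in\Perf(\azu)$.

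The only delicate point, and the step I would double-check most carefully, is the bookkeeping of left/right module structures: confirming that $M\otimes_\azu(-)$ is a well-defined exact functor out of $D(\azu^e)$ and that $M\otimes_\azu(\azu\otimes_R\azu)\simeq M\otimes_R\azu$ holds as right $\azu$-modules. Once that is pinned down, the argument is a short chase with thick subcategories, with no input beyond the definitions of smooth and proper and the identification of $\Perf$ with a generated thick subcategory.
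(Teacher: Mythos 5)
Your proof is correct, and the two directions are handled with exactly the right hypotheses (properness for ``perfect over $\azu$ implies perfect over $R$'' via the forgetful functor, smoothness for the converse via tensoring the resolution of the diagonal bimodule against $M$ and the projection-formula identification $M\otimes_\azu(\azu\otimes_R\azu)\simeq M\otimes_R\azu$). The paper gives no proof of this statement---it is quoted verbatim as Lemma 2.8 of To\"en--Vaqui\'e---and your argument is essentially the standard one appearing there, so there is nothing to reconcile.
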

The previous definition and lemma both generalize to a sheaf of quasi-coherent dg-algebras over a scheme as perfection is a
local property. For the remainder of the section, we prove some basic properties of $\azu$-twisted K-theory, often
assuming $\azu$ is connective. We will not use smooth and properness until the later sections.
\begin{proposition}
  \label{pi_zero}
  Let $\azu$, $S$ be connective dg-algebras over $R$. Then the natural maps induce isomorphisms
  \[
    K_i^\azu(S) \cong K_i^\azu(\pi_0(S)) \cong K_i^{\pi_0(\azu)}(S) \cong K_i^{\pi_0(\azu)}(\pi_0(S))
  \]
  for $i \leq 0$.
\end{proposition}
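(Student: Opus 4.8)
The plan is to deduce all four isomorphisms from the single assertion that, for a connective $\Eone$-ring $A$, the truncation map $A \to \pi_0(A)$ induces an isomorphism $K_i(A) \cong K_i(\pi_0(A))$ for every $i \leq 0$; equivalently, that any map of connective $\Eone$-rings which is an isomorphism on $\pi_0$ induces an isomorphism on $K_i$ for $i \leq 0$.

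First I would invoke the computation already recorded above: for a connective $R$-algebra $T$ and a connective dg-$R$-algebra $\mathcal{B}$ one has $K_i^{\mathcal{B}}(T) \cong K_i(T \otimes_R \mathcal{B})$. Taking $\mathcal{B} \in \{\azu, \pi_0(\azu)\}$ and $T \in \{S, \pi_0(S)\}$ identifies the four groups in the statement with $K_i$ of the four $\Eone$-rings $S \otimes_R \azu$, $\pi_0(S) \otimes_R \azu$, $S \otimes_R \pi_0(\azu)$, $\pi_0(S) \otimes_R \pi_0(\azu)$, each connective because a derived tensor product over the discrete ring $R$ of connective modules is connective. Reading off total degree zero in the spectral sequence $\mathrm{Tor}^R_p(\pi_q(-),\pi_{q'}(-)) \Rightarrow \pi_{p+q+q'}(-\otimes_R -)$, each of these rings has $\pi_0$ canonically $\pi_0(S)\otimes_R\pi_0(\azu)$, and the natural maps in the statement (induced by $S \to \pi_0(S)$ and $\azu \to \pi_0(\azu)$) are all isomorphisms on $\pi_0$. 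Hence the proposition reduces to the assertion of the previous paragraph.

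For $i = 0$: base change along $A \to \pi_0(A)$ is an equivalence from finitely generated projective $A$-modules to finitely generated projective $\pi_0(A)$-modules --- the fibre of $A \to \pi_0(A)$ is $1$-connective, so idempotents and retracts of free modules lift essentially uniquely (see \cite{lurie_ha}, Section 7.2.2) --- and $\pi_0 K(A)$ is the Grothendieck group of that category; thus $K_0(A) \cong K_0(\pi_0(A))$. For $i < 0$ I would run a descending induction on $i$ using Bass's fundamental theorem for non-connective $K$-theory of $\Eone$-rings: the localization sequence $\Perf(A) \to \Perf(A[t]) \to \Perf(A[t^{\pm}])$ together with the projective bundle decomposition $\Perf(\PP^1_A)\hmtpy\Perf(A)\oplus\Perf(A)$ --- both properties of the localizing invariant $K$ --- yields, naturally in $A$, the exact sequence
\[
  0 \to K_m(A) \to K_m(A[t])\oplus K_m(A[t^{-1}]) \to K_m(A[t^{\pm}]) \to K_{m-1}(A) \to 0
\]
for all $m$. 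Since $A[t]$, $A[t^{-1}]$, $A[t^{\pm}]$ are connective with $\pi_0$ equal to $\pi_0(A)[t]$, $\pi_0(A)[t^{-1}]$, $\pi_0(A)[t^{\pm}]$, taking $m = i+1 \leq 0$ and comparing this sequence for $A$ with the one for $\pi_0(A)$, the inductive hypothesis makes the first three vertical maps isomorphisms, whence so is the fourth, $K_i(A) \cong K_i(\pi_0(A))$.

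I expect the only real obstacle to be this negative-degree input: one needs the fundamental exact sequence (equivalently, the Bass delooping construction), together with its naturality, for non-connective $K$-theory of connective $\Eone$-rings rather than merely of discrete rings, which is why the argument leans on the localizing-invariant formalism and the $\PP^1$-bundle formula over a ring spectrum. Everything else is bookkeeping with truncations and derived tensor products, for which the one elementary fact needed is that $\pi_0(M \otimes_R N) \cong \pi_0(M)\otimes_R\pi_0(N)$ for connective $M$, $N$ over the discrete ring $R$.
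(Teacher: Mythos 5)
Your proposal is correct and follows the same route as the paper: identify the four twisted $K$-groups with $K_i$ of the four connective tensor products, observe they all share the same $\pi_0$, and conclude from the fact that $K_i$ of a connective $\Eone$-ring depends only on $\pi_0$ for $i \leq 0$. The only difference is that the paper simply cites Theorem 9.53 of \cite{blumberg_gepner_tabuada_universal} for that last fact, whereas you re-derive it via the equivalence on projective modules and the Bass fundamental sequence --- which is essentially the proof of the cited theorem.
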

\begin{proof}
  We have the following isomorphisms of discrete rings
  \[
    \pi_0(\azu
    \otimes_R S) \cong \pi_0(\azu \otimes_R \pi_0(S)) \cong \pi_0(\pi_0(\azu) \otimes_R S) \cong
    \pi_0(\pi_0(\azu) \otimes_R \pi_0(S)).
    \]
    The lemma follows since $K_i(R) \cong K_i(\pi_0(R))$ for $i \leq 0$ (see Theorem
  9.53 of \cite{blumberg_gepner_tabuada_universal}).
\end{proof}
The previous proposition suggests we can work discretely and then transfer the results to the derived
setting. This is true to some extent. However, taking $\pi_0$ of a connective dg-algebra does not
preserve smoothness, which is a necessary property for our proof of Proposition
\ref{platification}. We will also need reduction invariance for low dimensional K-groups.

    \begin{proposition}\label{reduction_invariance}
      Let $R$ be a commutative ring and $\azu$ a connective dg-algebra over $R$. Let $S$ be a
      commutative ring
      under $R$ and Let $I$ be a
      nilpotent ideal of $S$. Then the induced morphism $K_i^\azu(S) \xrightarrow{\cong}
      K_i^\azu(S/I)$ is an isomorphism for $i \leq 0$.
    \end{proposition}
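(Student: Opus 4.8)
The plan is to reduce the statement to nilinvariance of nonpositive $K$-theory for discrete (possibly noncommutative) rings, which is classical, and to check that the twist interacts well with the reduction. Since $\azu$ and $S$ are connective, Proposition~\ref{pi_zero} applies: for $i \leq 0$ the natural maps identify $K_i^\azu(S)$ with $K_i^{\pi_0(\azu)}(S)$ and $K_i^\azu(S/I)$ with $K_i^{\pi_0(\azu)}(S/I)$, compatibly with the morphism induced by $S \to S/I$. Unwinding definitions, $K_i^{\pi_0(\azu)}(T) = K_i(\pi_0(\azu)\otimes_R^{\mathbb L} T)$, and because the $K$-groups of a connective ring spectrum in degrees $\leq 0$ depend only on its $\pi_0$, this is $K_i(\pi_0(\azu)\otimes_R T)$ (underived tensor) for $T = S$ and $T = S/I$. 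Setting $B := \pi_0(\azu)$, a discrete $R$-algebra, we are therefore reduced to showing that the ring homomorphism $B\otimes_R S \to B\otimes_R S/I$ induces an isomorphism on $K_i$ for $i \leq 0$.

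Next I would identify the kernel of this map. Applying $B\otimes_R(-)$ to $0 \to I \to S \to S/I \to 0$ and using right exactness, the map $B\otimes_R S \to B\otimes_R S/I$ is surjective with kernel $J$ equal to the image of $B\otimes_R I$. A direct check shows $J$ is a two-sided ideal (for $\sum c_l\otimes s_l \in B\otimes_R S$ and $\sum b_k\otimes i_k$ representing an element of $J$, both $\sum c_l b_k\otimes s_l i_k$ and $\sum b_k c_l\otimes i_k s_l$ lie in $J$ since $s_l i_k, i_k s_l \in I$), and that $J^n = 0$ whenever $I^n = 0$, using the multiplicativity $(b\otimes s)(b'\otimes s') = bb'\otimes ss'$ of the tensor algebra. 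Thus $B\otimes_R S \to B\otimes_R S/I$ is a surjection of rings with nilpotent kernel.

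Finally I would invoke nilinvariance: $K_0$ is invariant under nilpotent extensions of arbitrary associative rings by the usual lifting of idempotents (see \cite{bass_algebraic_k-theory}), and since $J[t]$, $J[t^{-1}]$, $J[t^{\pm}]$ remain nilpotent in $(B\otimes_R S)[t]$ and its Laurent variants, Bass' fundamental exact sequence together with induction on $n$ extends this to $K_{-n}$ for all $n \geq 1$. Hence $K_i(B\otimes_R S) \cong K_i(B\otimes_R S/I)$ for $i \leq 0$, which proves the proposition. There is no serious obstacle here; the only points needing care are the bookkeeping in the first paragraph (that Proposition~\ref{pi_zero}'s identifications are natural in $S$, and that $\pi_0$ commutes with the relevant derived tensor products) and the fact that $B = \pi_0(\azu)$ need not be commutative, so one must use forms of $K_0$- and negative $K$-nilinvariance valid for all associative rings — harmless, since the classical arguments never use commutativity.
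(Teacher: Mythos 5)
Your proposal is correct and follows essentially the same route as the paper: reduce to the discrete algebra $\pi_0(\azu)$ via Proposition~\ref{pi_zero}, observe that the kernel of $\pi_0(\azu)\otimes_R S \to \pi_0(\azu)\otimes_R S/I$ is a nilpotent (two-sided) ideal as the image of $(\ker\varphi)\otimes_R\azu$, apply nil-invariance of $K_0$ for associative rings, and propagate to negative degrees by naturality of the fundamental (Bass) exact sequence. The only cosmetic difference is the order of operations — the paper first restricts to $K_0$ and then discretizes, while you discretize first — which changes nothing of substance.
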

    \begin{proof}
      By naturality of the fundamental exact sequence of twisted $K$-theory (see (\ref{fundamental}) and
      the surrounding discussion at the
      beginning of Section 3), we can restrict the proof
      to $K_0^\azu$. By Proposition \ref{pi_zero}, we can assume $\azu$ is a discrete algebra. Let $\varphi: S \surj S/I$ be the surjection. After $- \otimes_R \azu$ we have a
      surjection $(\ker \varphi)  \otimes_R \azu \surj \ker(\varphi \otimes_R \azu)$. The nonunital
      ring $(\ker
      \varphi) \otimes_R \azu$ is nilpotent. So $\ker(\varphi \otimes_R
      \azu)$ is nilpotent as well. The proposition follows from nil-invariance of $K_0$.
    \end{proof}
    A Zariski descent spectral sequence argument gives us a global result.
    \begin{corollary}\label{scheme_reduction_invariance}
      Let $X$ be a quasi-compact quasi-separated scheme of finite Krull dimension $d$ and $\azu$ a
      sheaf of connective quasi-coherent dg-algebras over $X$. The natural morphism $f: X_{red} \rightarrow X$ induces isomorphisms
      \[
        K_{-i}^{f^*\azu}(X_{red}) \cong K_{-i}^\azu(X)
      \]
      for $i \geq d$.
    \end{corollary}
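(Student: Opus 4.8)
The plan is to propagate the affine statement of Proposition~\ref{reduction_invariance} to all of $X$ via Zariski descent for twisted $K$-theory, which is available because $K^\azu$ is a localizing invariant.

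First I would realize the comparison as a map of Zariski sheaves of spectra. The closed immersion $f\colon X_{red}\to X$ is a homeomorphism on underlying spaces, so $U\mapsto U_{red}$ identifies the small Zariski site of $X$ with that of $X_{red}$ and $f^{-1}(U)=U_{red}$; over an affine $U=\Spec A$ the sheaf $\azu$ restricts to a connective dg-$A$-algebra $\azu_U$, and $f^*\azu$ restricts to $\azu_U\otimes_A^{\mathbb{L}}A_{red}$. Functoriality of twisted $K$-theory in the scheme variable yields a natural transformation $\eta$ from the presheaf $U\mapsto K^\azu(U)$ to the presheaf $U\mapsto K^{f^*\azu}(U_{red})$, and over an affine $U=\Spec A$ the map $\eta_U$ is the canonical map $K^{\azu_U}(A)\to K^{\azu_U}(A_{red})$. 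Applying Proposition~\ref{reduction_invariance} with $R=S=A$ and $I=\tn{nil}(A)$ shows that $\pi_i(\eta_U)$ is an isomorphism for every affine $U$ and every $i\le 0$. Writing $\mathcal F$ and $\mathcal G$ for the Zariski sheafifications of these two presheaves, descent for localizing invariants identifies $R\Gamma(X,\mathcal F)\simeq K^\azu(X)$ and $R\Gamma(X,\mathcal G)\simeq K^{f^*\azu}(X_{red})$, and since the homotopy sheaves of a Zariski sheafification are obtained by sheafifying the presheaf homotopy groups, the affine isomorphisms above upgrade to an equivalence $\tau_{\le 0}\mathcal F\simeq\tau_{\le 0}\mathcal G$ of $0$-truncated sheaves of spectra.

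Next I would feed this into the descent spectral sequence. Because $\dim X=d$, the Zariski cohomological dimension of $X$ is at most $d$, so the descent spectral sequence $E_2^{p,q}=H^p_{\tn{Zar}}(X,\pi_q\mathcal E)\Rightarrow\pi_{q-p}R\Gamma(X,\mathcal E)$ is supported in the columns $0\le p\le d$; hence for any sheaf of spectra $\mathcal E$ with $\pi_q\mathcal E=0$ for $q<k$ one has $\pi_m R\Gamma(X,\mathcal E)=0$ for $m<k-d$. Applying this with $\mathcal E=\tau_{\ge 1}\mathcal F$, which is $1$-connective, gives $\pi_m R\Gamma(X,\tau_{\ge 1}\mathcal F)=0$ for $m\le -d$; the long exact sequence of the fiber sequence $\tau_{\ge 1}\mathcal F\to\mathcal F\to\tau_{\le 0}\mathcal F$ then gives $\pi_m K^\azu(X)\cong\pi_m R\Gamma(X,\tau_{\le 0}\mathcal F)$ for $m\le -d$, and likewise with $\mathcal G$ in place of $\mathcal F$. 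Combining with the equivalence $\tau_{\le 0}\mathcal F\simeq\tau_{\le 0}\mathcal G$ from the previous step produces the desired isomorphisms $K^\azu_{-i}(X)\cong K^{f^*\azu}_{-i}(X_{red})$ for all $i\ge d$.

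The argument is essentially formal once Proposition~\ref{reduction_invariance} is in hand, and the only point that needs care is obtaining the sharp bound $i\ge d$ rather than the weaker $i>d$. This is precisely where the $\pi_0$-part of Proposition~\ref{reduction_invariance} is used: it upgrades the coarse statement that the negative $K$-sheaves of $X$ and $X_{red}$ agree (equivalently, that the fiber of $\mathcal F\to\mathcal G$ is merely connective) to the genuine equivalence $\tau_{\le 0}\mathcal F\simeq\tau_{\le 0}\mathcal G$, which in turn lets us carry one extra degree of connectivity (that $\tau_{\ge 1}\mathcal F$ is $1$-connective, not merely connective) through the descent spectral sequence. The remaining ingredients — functoriality of $K^\azu$ in the scheme, the identification of small sites under the nil-immersion $f$, and the vanishing of high Zariski cohomology on a qcqs scheme of finite Krull dimension — are standard.
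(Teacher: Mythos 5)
Your argument is correct and is essentially the paper's own proof: both compare the Zariski descent spectral sequences for $K^\azu$ on $X$ and $K^{f^*\azu}$ on $X_{red}$, identify them in rows $q\le 0$ via the affine nil-invariance of Proposition~\ref{reduction_invariance}, and use that the Zariski cohomological dimension of $X$ is at most $d$ to conclude in total degrees $\le -d$. Your bookkeeping with the truncations $\tau_{\ge 1}\mathcal{F}$ and $\tau_{\le 0}\mathcal{F}$ is just a repackaging of that same spectral-sequence comparison, so nothing further is needed.
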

    \begin{proof}
      We have descent spectral sequences
       \begin{align*}
        E_2^{p,q} &= H^p_{Zar}(X, (\pi_qK^\azu)^\sim) \Rightarrow \pi_{q-p}K^\azu(X) \tn{ and } \\
        E_2^{p,q} &= H^p_{Zar}(X, f_*(\pi_qK^{f^*(\azu)})^\sim) \Rightarrow
                    \pi_{q-p}K^{f^*\azu}(X_{red}) 
       \end{align*}
       both with differential $d_2 = (2, 1)$. We let $F^\sim$ denote the Zariski sheafification of
       the presheaf $F$. The spectral sequences agree for $q \leq 0$. By Corollary 3.27 of
       \cite{clausen_mathew}, the spectral sequences vanishes for $p > d$.
     \end{proof}
     In Theorem \ref{relative_main_theorem}, we extend our main theorem across smooth affine
     morphisms. We will need reduction invariance in this setting.
     \begin{defin}
       For $f: S \rightarrow X$ a morphism of quasi-compact quasi-separated schemes and $\azu$ a
       sheaf of quasi-coherent dg-algebras over $X$, the \textit{relative $\azu$-twisted $K$-theory of $f$} is
       \[
         K^\azu(f) := \fib(K^\azu(X) \xrightarrow{f^*} K^\azu(S)).
       \] 
     \end{defin}
     As defined, $K^\azu(f)$ is a spectrum. There is an associated presheaf of spectra on the base
     scheme $X$ given by $U \mapsto K^\azu(f_{|_U})$. This presheaf sits in a fiber sequence
     \[
       K^\azu(f) \rightarrow K^\azu \rightarrow K^\azu_S
     \]
     where the presheaf $K^\azu_S$ is also defined by pullback along $f$. Both presheaves $K^\azu$ and $K^\azu_S$
     satisfy Nisnevich descent and so $K^\azu(f)$ does as well.
     \begin{corollary}
       \label{relative_reduction_invariance}
       Let $f: S \rightarrow X$ be an affine morphism of quasi-compact quasi-separated
       schemes. Suppose $X$ has Krull dimension $d$ and let $\azu$ be a sheaf of connective quasi-coherent dg-algebras over $X$. Then
       the commutative diagram
       \[
         \begin{tikzcd}
           S_{red} \ar[d] \ar[r, "f_{red}"] & X_{red} \ar[d, "g"] \\
           S \ar[r, "f"] & X
         \end{tikzcd}
       \]
       induces an isomorphism of relative twisted $K$-theory groups
       \[
         K_{-i}^{g^*\azu}(f_{red})\cong K_{-i}^{\azu}(f)
       \]
       for $i \geq d + 1$.
     \end{corollary}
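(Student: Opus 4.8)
The plan is to deduce the statement from the fiber sequence $K^\azu(f)\to K^\azu\to K^\azu_S$ of presheaves on $X$ together with the reduction invariance already available for the outer two terms, and then run the Zariski descent spectral sequence exactly as in the proof of Corollary \ref{scheme_reduction_invariance}.

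First, the square in the statement induces a map of fiber sequences of presheaves of spectra on $X$,
\[
\begin{tikzcd}
K^\azu(f)\ar[r]\ar[d] & K^\azu\ar[r]\ar[d] & K^\azu_S\ar[d]\\
g_*K^{g^*\azu}(f_{red})\ar[r] & g_*K^{g^*\azu}\ar[r] & g_*K^{g^*\azu}_{S_{red}},
\end{tikzcd}
\]
in which every presheaf satisfies Nisnevich, hence Zariski, descent (for the left column this was recorded just before the statement; the other two columns are $K^\azu$ and its evident analogue built from $S$, pushed forward along the homeomorphism $g$). Passing to Zariski-sheafified homotopy groups turns each row into a long exact sequence of sheaves on $X$. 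As in the proof of Corollary \ref{scheme_reduction_invariance}, the middle vertical map is an isomorphism on $(\pi_q-)^\sim$ for $q\le 0$. For the right-hand column I would use that $f$ is affine: on an affine open $U=\Spec A\subseteq X$ the scheme $S_U$ is affine, say $S_U=\Spec B$, one has $(S_U)_{red}=(S_{red})_U$, and the pulled-back twist $(\azu|_U)\otimes_A^{\mathbb L}B$ is again connective, so Proposition \ref{reduction_invariance} gives $K_i^{\azu|_U}(B)\cong K_i^{\azu|_U}(B_{red})$ for $i\le 0$ (if one is outside the Noetherian setting one first writes $B$ as a filtered colimit of finitely generated subalgebras and uses that $K$, reduction, and $-\otimes^{\mathbb L}\azu$ all commute with filtered colimits); sheafifying, the right vertical map too is an isomorphism on $(\pi_q-)^\sim$ for $q\le 0$. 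Comparing the long exact sequences of homotopy sheaves (a five-lemma argument, shifted by one because the fiber term $K^\azu(f)$ sits one step to the left) then shows that the left vertical map is an isomorphism on $(\pi_q-)^\sim$ for $q\le -1$.

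Finally I would feed this into the descent spectral sequences
\[
E_2^{p,q}=H^p_{Zar}(X,(\pi_qK^\azu(f))^\sim)\Rightarrow\pi_{q-p}K^\azu(f),\qquad E_2^{p,q}=H^p_{Zar}(X_{red},(\pi_qK^{g^*\azu}(f_{red}))^\sim)\Rightarrow\pi_{q-p}K^{g^*\azu}(f_{red}).
\]
Since $g$ is a homeomorphism these have identical $E_2$-terms for $q\le -1$, and by Corollary 3.27 of \cite{clausen_mathew} both are concentrated in the range $0\le p\le d$ (note $\dim X_{red}=\dim X=d$). For $i\ge d+1$ every term $E_r^{p,p-i}$ contributing to $\pi_{-i}$, and every term connected to it by a $d_r$-differential, has second index $\le -1$, so the two spectral sequences, and hence their abutments, agree in that degree; this yields $K_{-i}^{g^*\azu}(f_{red})\cong K_{-i}^{\azu}(f)$ for $i\ge d+1$.

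The delicate point is the degree bookkeeping: reduction invariance of $K^\azu$ and of $K^\azu_S$ holds in the range $q\le 0$, but the long exact sequence of the fiber term only propagates it to $q\le -1$, which is exactly what forces the bound $i\ge d+1$ rather than $i\ge d$. One must also use affineness of $f$ essentially, since otherwise $S_U$ would be an arbitrary quasi-compact quasi-separated scheme, possibly of infinite Krull dimension, and Proposition \ref{reduction_invariance} could not be applied to it.
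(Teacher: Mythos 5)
Your proposal is correct and follows essentially the same route as the paper: compare the long exact sequences of the fiber sequence $K^\azu(f)\to K^\azu\to K^\azu_S$ on affine opens (using affineness of $f$ and Proposition \ref{reduction_invariance}), apply the five-lemma to get isomorphisms of sheafified homotopy groups for $q\le -1$, and conclude via the descent spectral sequence with the cohomological dimension bound from Corollary 3.27 of \cite{clausen_mathew}. Your explicit attention to the degree shift (reduction invariance in degrees $q\le 0$ propagating only to $q\le -1$ for the fiber, hence the bound $i\ge d+1$) matches the paper's bookkeeping exactly.
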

     \begin{proof}
       We have two descent spectral sequences
       \begin{align*}
        E_2^{p,q} &= H^p_{Zar}(X, (\pi_qK^\azu(f))^\sim) \Rightarrow \pi_{q-p}K^\azu(f)(X) \tn{ and } \\
        E_2^{p,q} &= H^p_{Zar}(X, g_*(\pi_qK^{g^*\azu}(f_{red}))^\sim) \Rightarrow
                    \pi_{q-p}K^{g^*\azu}(f_{red})(X_{red}) \\
       \end{align*}
       with differential of degree $d = (2, 1)$ and $F^\sim$ the sheafification of the presheaf $F$. For an open affine $\Spec R \rightarrow
       X$ with pullback $\Spec A \rightarrow S$ we examine the morphism of long exact sequences when $q \leq 0$
       \[
         \begin{tikzcd}[column sep=tiny]
           \cdots \ar[r] & \pi_{q}K^\azu(R) \ar[d, "\cong"] \ar[r] &
           \pi_{q}K^\azu(A) \ar[d, "\cong"] \ar[r] & \pi_{q-1}K^\azu(f) \ar[d] \ar[r] &\pi_{q-1}K^\azu(R) \ar[d, "\cong"] \ar[r] &
           \pi_{q-1}K^\azu(A) \ar[d, "\cong"] \ar[r] & \cdots \\
           \cdots \ar[r] & \pi_{q}K^\azu(R_{red}) \ar[r] &
           \pi_{q}K^\azu(A_{red}) \ar[r] & \pi_{q-1}K^\azu(f_{red}) \ar[r] &\pi_{q-1}K^\azu(R_{red}) \ar[r] &
           \pi_{q-1}K^\azu(A_{red}) \ar[r] &\cdots
         \end{tikzcd}
       \]
       By the 5-lemma, this induces sheaf isomorphisms $g_*(\pi_qK^{g^*\azu}(f_{red}))^\sim \cong
       (\pi_qK^\azu(f))^\sim$ for $q < 0$ and, as in Corollary \ref{scheme_reduction_invariance}, cohomology vanishes for $p > d$.
     \end{proof}

We will need pro-excision for abstract blow-up squares. Recall that
an abstract blow-up square is a pullback square
      \begin{equation} \label{abs}
        \begin{tikzcd}
        D \ar[d] \ar[r] & \tilde{X} \ar[d] \\
        Y \ar[r] & X          
      \end{tikzcd}
      \tag{$*$}
      \end{equation}
      with $Y \rightarrow X$ a closed
      immersion and $\tilde{X} \rightarrow X$ a proper morphism which restricts to an isomorphism of
       open subschemes
      $\tilde{X} \setminus D \rightarrow X \setminus Y$. The theorem is stated using the
      $\infty$-category of pro-spectra $\cat{Pro}(\Spectra)$, where an object is a small cofiltered
      diagram, $E: \Lambda \rightarrow \Spectra$, valued in spectra. We write $\{E_n\}$
      for the corresponding pro-spectrum. If the brackets
      and index are omitted, then the pro-spectrum is considered constant. After adjusting
      equivalence class representatives, we may assume the
      cofiltered diagram is fixed when working with a finite set of pro-spectra. Any
      morphism can then be represented by a natural transformation of diagrams (also known as a
      level map). We will need no
      knowledge of the $\infty$-category beyond the following definition.
      \begin{defin}
        A square of pro-spectra
        \[
          \begin{tikzcd}
            \{E_n\} \ar[r] \ar[d] & \{F_n\} \ar[d] \ar[d] \\
            \{ X_n\} \ar[r] & \{ Y_n\}
          \end{tikzcd}
        \]
        is \textit{pro-cartesian} if and only if the induced map on the level-wise fiber
        pro-spectra is a weak equivalence (see Definition 2.27 of \cite{land_tamme}).
      \end{defin}
      The following is Theorem A.8
of Land--Tamme \cite{land_tamme}. The theorem holds much more generally for any $k$-connective
localizing invariant (see Definition 2.5 of \cite{land_tamme}). Twisted $K$-theory is
$1$-connective.
\begin{theorem}[Land--Tamme \cite{land_tamme}]\label{land_tamme}
      Given an abstract blow-up square (\ref{abs})
      of schemes and a sheaf of dg-algebras $\azu$ on $X$ then the square of pro-spectra
      \[
        \begin{tikzcd}
        \ar[d] K^\azu(X) \ar[r] & K^\azu(\tilde{X}) \ar[d] \\
        \{K^\azu(Y_n)\} \ar[r] & \{K^\azu(D_n)\}
        \end{tikzcd}
      \]
      is pro-cartesian (where $Y_n$ is the infinitesimal thickening of $Y$).
    \end{theorem}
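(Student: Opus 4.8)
The plan is to obtain this as a direct instance of \cite[Theorem A.8]{land_tamme}, which produces the pro-cartesian square for \emph{any} localizing invariant that is $1$-connective (more generally $k$-connective) in the sense of \cite[Definition 2.5]{land_tamme}. So the whole content is to check that $K^\azu$ satisfies those two hypotheses. That it is a localizing invariant has already been recorded: $K^\azu$ is $K$ composed with base change $-\widehat{\otimes}_{\Perf(R)}\Perf(\azu)$, and this composite is localizing by Theorem 9.36 of \cite{blumberg_gepner_tabuada_universal}. So the only real point is $1$-connectivity.

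For $1$-connectivity I would transfer the corresponding estimate, which Land--Tamme verify for nonconnective $K$-theory itself. On a connective $R$-algebra $S$ we have $K^\azu(S)\simeq K(S\otimes_R\azu)$, and since $\azu$ is connective the functor $S\mapsto S\otimes_R\azu$ is right $t$-exact over the discrete base $R$: it sends connective $\Eone$-algebras to connective $\Eone$-algebras, and for a map $f\colon S\to S'$ of connective $R$-algebras the fibre of $S\otimes_R\azu\to S'\otimes_R\azu$ is $\fib(f)\otimes_R\azu$, which is $n$-connective whenever $\fib(f)$ is (tensoring an $n$-connective complex with a connective one over $R$ stays $n$-connective). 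Hence every connectivity bound relating the connectivity of a map of connective algebras to that of the fibre of the induced map on nonconnective $K$-theory holds verbatim for $K^\azu$, and $K^\azu$ inherits the $1$-connectivity of $K$. This is the one place connectivity of $\azu$ is used; it is also compatible with the earlier Proposition \ref{pi_zero}, which already records the $\pi_{\leq 0}$ part of this comparison.

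With both hypotheses in hand, Theorem A.8 of \cite{land_tamme} applies to $E=K^\azu$ and gives the statement. If one prefers to invoke only the affine (Milnor-square) form of Theorem A.8, the scheme-level statement follows by restricting the abstract blow-up square (\ref{abs}) along a finite affine Nisnevich cover of $X$ — over $\Spec A\subseteq X$ one gets $\Spec A$, the affine $\tilde X\times_X\Spec A$, the thickenings $\Spec A/I^n$ of $Y\cap\Spec A=\Spec A/I$, and $D_n=\tilde X\times_X\Spec A/I^n$ — and then sheafifying, since $K^\azu$ and hence each presheaf in the square satisfies Nisnevich descent (Proposition A.15 of \cite{clausen_mathew_naumann_noel}, recorded above) and the pro-cartesian condition, being a level-wise-fibre pro-equivalence, is detected locally. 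The main (and essentially only) obstacle is the bookkeeping of the connectivity estimate in the second paragraph; everything else is formal or already established.
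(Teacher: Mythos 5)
Your proposal is correct and matches the paper's treatment exactly: the paper also obtains the statement as an instance of \cite[Theorem A.8]{land_tamme} for $k$-connective localizing invariants, having already recorded that $K^\azu$ is localizing, and simply asserts that twisted $K$-theory is $1$-connective. Your second paragraph supplies the connectivity-transfer argument (connectivity of $\azu$ makes $-\otimes_R\azu$ preserve the connectivity of fibres of maps of connective rings) that the paper leaves implicit, which is a welcome addition rather than a deviation.
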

    The pro-cartesian square of pro-spectra gives a long exact sequence of pro-groups
    \[
      \begin{tikzcd}[column sep=small]
            \cdots \ar[r] & \{K_{-i+1}^\azu(E_n)\} \ar[r]  & K_{-i}^\azu(X) \ar[r] & K_{-i}^\azu(\tilde{X}) \oplus \{ K_{-i}^\azu(Y_n)\} \ar[r]& \{ K_{-i}^\azu(E_n)\} \ar[r] & \cdots
      \end{tikzcd}
    \]
    which is the key to our induction argument.

      \section{Blowing-up negative twisted $K$-theory classes}
      We turn to our main contribution of the existence of a projective
      birational morphism which kills a given negative twisted $K$-theory class (when twisting by a smooth
      proper connective dg-algebra). Let $X$ be a quasi-compact quasi-separated scheme and $\azu$ a
      sheaf of quasi-coherent dg-algebras on $X$. We first construct geometric cycles for negative twisted
      K-theory classes on $X$ using a classical argument of Bass (see XII.7 of
      \cite{bass_algebraic_k-theory}) which works for a general additive invariant. We have an open cover
      \[
        \begin{tikzcd}
          X[t^{\pm}] \ar[r, "f"] \ar[d, "g"] & X[t^-] \ar[d, "j"] \\
          X[t] \ar[r, "k"] & \proj^1_X.
        \end{tikzcd}
      \]
      Since twisted $K$-theory satisfies Zariski descent, there is an associated Mayer-Vietoris sequence of
      homotopy groups
      \[
        \begin{tikzcd}
          \cdots \ar[r] &K_{-n}^\azu(\proj^1_X) \ar[r, "{(j^* k^*)}"] & K_{-n}^\azu(X[t]) \oplus K_{-n}^\azu(X[t^-]) \ar[r, "f^*-g^*"]
          & K^\azu_{-n}(X[t^{\pm}]) \ar[r, "\partial"] & K^\azu_{-n-1}(\proj^1_X) \ar[r] & \cdots
        \end{tikzcd}.
      \]
      As an additive invariant, $K^\azu(\proj^1_X) \simeq K^\azu(X) \oplus K^\azu(X)$ splits as a
      $K^\azu(X)$-module with generators
      \[
        [\OO \otimes_{\OO_X} \azu] = [\azu] \text{ and } [\OO(1)
        \otimes_{\OO_X} \azu]=[\azu(1)]
      \]
      corresponding to the Beilinson semiorthogonal
      decomposition. Adjusting the generators to $[\azu]$ and $[\azu] - [\azu(1)]$, we can identify
      the map $(j^*, k^*)$ as it is a map of $K^\azu(X)$-modules. The second generator vanishes
      under each restriction. This identifies the map as
      \[
        K^\azu(\proj^1_X) \simeq K^\azu(X)[\azu] \oplus K^\azu(X)([\azu] - [\azu(1)])
        \xrightarrow{\Delta \oplus 0} K^\azu(X[t]) \oplus K^\azu(X[t^-])
      \]
      with $\Delta$ the diagonal map corresponding to pulling back along the projections $X[t]
      \rightarrow X$ and $X[t^-] \rightarrow X$. As $\Delta$ is an embedding the long exact sequence
      splits as
      \begin{equation}\label{fundamental}
        \begin{tikzcd}
          0 \ar[r] & K_{-n}^\azu(X) \ar[r, "\Delta"] & K_{-n}^\azu(X[t]) \oplus K_{-n}^\azu(X[t^-]) \ar[r, "\pm"]
          & K^\azu_{-n}(X[t^{\pm}]) \ar[r, "\partial"] & K^\azu_{-n-1}(X)\ar[r] & 0
        \end{tikzcd}.
        \tag{$\dagger$}
      \end{equation}
      After iterating the complex
      \[
        K_{-n}^\azu(X[t]) \rightarrow K^\azu_{-n}(X[t^\pm]) \surj K^\azu_{-n-1}(X),
      \]
      we can piece together a complex
      \[
        K_0^\azu(\aff^{n+1}_X) \rightarrow K^\azu_0(\mathbb{G}_{m,X}^{n+1}) \surj K^\azu_{-n-1}(X).
      \]
      Negative twisted $K$-theory classes have geometric
      representations as twisted perfect complexes on $\mathbb{G}^i_{m, X}$. There is even a
      sufficient geometric criterion implying a given representative
      is $0$; it is the restriction of a twisted perfect complex on $\aff^i_{X}$. Our proof of the main
      proposition of this section will use these representatives. We first need a lemma about extending finitely-generated discrete modules in a twisted
      setting.

      \begin{lemma}
        \label{twisted_extension}
        Let $j: U \rightarrow X$ be an open immersion of quasi-compact quasi-separated schemes. Let $\azu$ be a sheaf
        of proper connective quasi-coherent dg-algebras on $X$ and $j^*\azu$ its
        restriction. Let $\mathcal{N}$ be a discrete $j^*\azu$-module which is finitely generated as an
        $\OO_U$-module. Then there exists a discrete $\azu$-module $\mathcal{M}$, finitely
        generated over $\OO_X$, such that $j^*\mathcal{M} \cong \mathcal{N}$.
      \end{lemma}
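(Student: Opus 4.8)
The plan is to strip off the derived structure, reducing to a statement about sheaves of discrete modules over the ordinary (noncommutative) $\OO_X$-algebra $\mathcal{B}:=\pi_0(\azu)$, and then to extend $\mathcal{N}$ by hand inside the pushforward $j_*\mathcal{N}$, in the spirit of the classical extension of a finite-type quasi-coherent sheaf across a quasi-compact open immersion (see e.g.\ \cite{thomason_trobaugh}).

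First I would reduce to the discrete case. Since $\azu$ is connective, a discrete right $\azu$-module is the same datum as a sheaf of right modules over $\mathcal{B}=\pi_0(\azu)$, and likewise $j^*\azu$-modules correspond to $j^*\mathcal{B}=\pi_0(j^*\azu)$-modules; moreover $j^*$ of a discrete $\azu$-module, computed as an $\azu$-module, agrees with ordinary restriction of the underlying quasi-coherent sheaf. Because $\azu$ is proper and connective, $\pi_0(\azu)$ is of finite type as an $\OO_X$-module — this finiteness is the only input about $\azu$ that the argument uses. So the statement becomes: given the quasi-coherent sheaf of right $j^*\mathcal{B}$-modules $\mathcal{N}$ on $U$, of finite type over $\OO_U$, produce a quasi-coherent sheaf of right $\mathcal{B}$-modules $\mathcal{M}$ on $X$, of finite type over $\OO_X$, with $j^*\mathcal{M}\cong\mathcal{N}$.

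For the construction I would work inside $j_*\mathcal{N}$. The open immersion $j$ is separated, and it is quasi-compact (the preimage of a quasi-compact open $V\subseteq X$ is $U\cap V$, an intersection of quasi-compact opens in the quasi-separated scheme $X$), so $j_*$ preserves quasi-coherence; hence $j_*\mathcal{N}$ is a quasi-coherent right $\mathcal{B}$-module via the unit $\mathcal{B}\to j_*j^*\mathcal{B}$, and $j^*(j_*\mathcal{N})\cong\mathcal{N}$ because the counit of $j^*\dashv j_*$ is an isomorphism for an open immersion. Now $j_*\mathcal{N}$, being quasi-coherent on a quasi-compact quasi-separated scheme, is the filtered union of its finite-type quasi-coherent $\OO_X$-submodules $\mathcal{F}_\alpha$; replacing each $\mathcal{F}_\alpha$ by the right $\mathcal{B}$-submodule it generates, namely the image of $\mathcal{F}_\alpha\otimes_{\OO_X}\mathcal{B}\to j_*\mathcal{N}$ — still of finite type over $\OO_X$, precisely because $\mathcal{B}$ is — I obtain a filtered exhaustion of $j_*\mathcal{N}$ by quasi-coherent right $\mathcal{B}$-submodules $\mathcal{G}_\alpha$ that remain of finite type over $\OO_X$. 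Applying the exact, colimit-preserving functor $j^*$ yields $\mathcal{N}=\bigcup_\alpha j^*\mathcal{G}_\alpha$, an increasing union of quasi-coherent $j^*\mathcal{B}$-submodules of $\mathcal{N}$. Since $\mathcal{N}$ is of finite type over $\OO_U$ and $U$ is quasi-compact, a finite affine cover of $U$ and a finite set of local generators all lie in a single $j^*\mathcal{G}_{\alpha_0}$ (sections over quasi-compact opens commute with the filtered colimit), forcing $j^*\mathcal{G}_{\alpha_0}=\mathcal{N}$. Then $\mathcal{M}:=\mathcal{G}_{\alpha_0}$, regarded as a discrete right $\azu$-module by restriction along $\azu\to\pi_0(\azu)$, is the desired extension.

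The only step with genuine content — and the one place properness is used — is upgrading the standard exhaustion of $j_*\mathcal{N}$ by finite-type $\OO_X$-submodules to an exhaustion by $\mathcal{B}$-submodules that are still finite type over $\OO_X$; this is exactly where $\pi_0(\azu)$ being of finite type over $\OO_X$ enters, so that $\mathcal{F}_\alpha\otimes_{\OO_X}\mathcal{B}$, and hence its image in $j_*\mathcal{N}$, is finite type. The remainder is bookkeeping: quasi-coherence of $j_*$ for the quasi-compact separated morphism $j$, exactness of $j^*$ together with $j^*j_*\cong\id$, and the identification of discrete modules over a connective dg-algebra with modules over its $\pi_0$. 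One could instead cite the classical $\OO$-module extension result verbatim and rerun its proof over the ringed space $(X,\mathcal{B})$, but this comes down to the same argument.
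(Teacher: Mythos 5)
Your proposal is correct and follows essentially the same route as the paper: reduce to modules over $H_0(\azu)$ (finite over $\OO_X$ by properness), use $j^*j_*\mathcal{N}\cong\mathcal{N}$, exhaust $j_*\mathcal{N}$ by finitely generated $\azu$-submodules still finite over $\OO_X$, and use exactness of $j^*$ plus finite generation of $\mathcal{N}$ to stop at a finite stage. Your construction of the exhaustion (enlarging finite-type $\OO_X$-submodules to $\mathcal{B}$-submodules via the image of $\mathcal{F}_\alpha\otimes_{\OO_X}\mathcal{B}$) just makes explicit a step the paper leaves implicit.
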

      \begin{proof}
        Note that $H_{\geq 1}(j^*\azu)$ necessarily acts trivially on
    $\mathcal{N}$. So the $j^*\azu$-module structure on $\mathcal{N}$ comes from forgetting
    along the map $j^*\azu
    \rightarrow H_0(j^*\azu)$ and the natural $H_0(j^*\azu)$-module structure. Under
    restriction,
    \[
      j^*H_0(\azu) \cong H_0(j^*\azu).
    \]
    We reduce to when $\azu$ is a quasi-coherent sheaf of discrete $\OO_X$-algebras, finite over the
    structure sheaf. We have an isomorphism $\mathcal{N} \cong j^*j_*\mathcal{N}$. Write
    $j_*\mathcal{N}$ as a filtered colimit of its finitely generated $\azu$-submodules
    $j_*\mathcal{N} \cong \underset{\lambda}\colim \mathcal{M}_\lambda$. The pullback is exact, so
    we can write $\mathcal{N} \cong \underset{\lambda}\colim j^*\mathcal{M}_\lambda$ as a filtered
    colimit of finitely generated submodules. As $\mathcal{N}$ is finitely generated itself, this
    isomorphism factors at some stage and $\mathcal{N} \cong j^*\mathcal{M}_\lambda$.
  \end{proof}

\begin{proposition}
  \label{platification}
  Let $X$ be a reduced scheme which is quasi-projective over a Noetherian affine scheme. Let $\azu$ be a
  sheaf of smooth proper
  connective quasi-coherent dg-algebras on $X$. Let $\gamma \in K_{-i}^\azu (X)$
  for $i > 0$. Then there is a projective birational morphism $\rho: \tilde{X} \rightarrow X$ so
  that $\rho^*\gamma = 0 \in K_{-i}^\azu(\tilde{X})$.
\end{proposition}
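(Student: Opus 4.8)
The plan is to follow Bass's classical construction of geometric cycles for negative $K$-theory classes and then to annihilate such a cycle by a Raynaud--Gruson flattening blow-up, as in the untwisted arguments behind \cite{kerz}; smoothness and properness of $\azu$ enter precisely so that Lemma \ref{perfection_transference} and Lemma \ref{twisted_extension} make this go through in the twisted setting. Concretely, iterating the fundamental exact sequence (\ref{fundamental}), which is natural in $X$, produces a surjection $K_0^\azu(\mathbb{G}_{m,X}^i)\twoheadrightarrow K_{-i}^\azu(X)$ compatible with pullback, and the composite $K_0^\azu(\aff_X^i)\to K_0^\azu(\mathbb{G}_{m,X}^i)\to K_{-i}^\azu(X)$ vanishes. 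Choosing a preimage $\beta\in K_0^\azu(\mathbb{G}_{m,X}^i)$ of $\gamma$, it therefore suffices, by naturality, to find a projective birational $\rho\colon\tilde X\to X$ such that $\rho^*\beta$ lies in the image of the restriction $K_0^{\rho^*\azu}(\aff_{\tilde X}^i)\to K_0^{\rho^*\azu}(\mathbb{G}_{m,\tilde X}^i)$ --- equivalently, such that a twisted perfect complex representing $\rho^*\beta$ extends, up to $K_0$, along the open immersion $\mathbb{G}_{m,\tilde X}^i\hookrightarrow\aff_{\tilde X}^i$.

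First I would reduce to the case of a representative built from honest $X$-flat twisted sheaves. Note $\mathbb{G}_{m,X}^i$ is Noetherian with an ample line bundle and $\aff_X^i$ is smooth over $X$, as $X$ is quasi-projective over a Noetherian affine scheme. Represent $\beta$ by a twisted perfect complex $\mathcal P$ on $\mathbb{G}_{m,X}^i$; by Lemma \ref{perfection_transference} it is perfect over $\OO_{\mathbb{G}_{m,X}^i}$, hence has bounded coherent homology sheaves $H_k(\mathcal P)$, each a discrete $\azu$-module since $\azu$ is connective. Generic flatness ($X$ reduced and Noetherian) makes the $H_k(\mathcal P)$ flat over $X$ over a dense open, so the Raynaud--Gruson flattening theorem provides a projective birational $X_1\to X$ with center disjoint from that open such that the strict transforms of the $H_k(\mathcal P)$ along $\mathbb{G}_{m,X_1}^i\to\mathbb{G}_{m,X}^i$ are flat over $X_1$ --- and hence, being coherent on a scheme smooth over $X_1$, perfect over $\OO$, hence perfect over $\azu$ again by Lemma \ref{perfection_transference}. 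Replacing $X$ by $X_1$ and $\azu,\gamma,\beta$ by their pullbacks, we may assume $\beta=\sum_k(-1)^k[\mathcal V_k]$ for discrete $\azu$-modules $\mathcal V_k$ on $\mathbb{G}_{m,X}^i$, coherent over $\OO$ and flat over $X$.

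Next I would extend and flatten once more. By Lemma \ref{twisted_extension}, applied to the open immersion $\mathbb{G}_{m,X}^i\hookrightarrow\aff_X^i$ and the proper connective dg-algebra $\azu$, each $\mathcal V_k$ extends to a coherent $\azu$-module $\mathcal W_k$ on $\aff_X^i$. Flatten $\bigoplus_k\mathcal W_k$ over $X$ by Raynaud--Gruson to get a projective birational $\rho\colon\tilde X\to X$ (composed with the blow-up of the previous step) whose strict transforms $\tilde{\mathcal W}_k$ on $\aff_{\tilde X}^i$ are flat over $\tilde X$; the $\azu$-action passes to the strict transform because the exceptional divisor is, locally, cut out by a central non-zerodivisor pulled back from $\OO_{\tilde X}$. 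As before --- by the fiberwise criterion for perfectness, using that $\aff^i$ over a field is regular --- each $\tilde{\mathcal W}_k$ is perfect over $\OO_{\aff_{\tilde X}^i}$, hence perfect over $\rho^*\azu$ by Lemma \ref{perfection_transference}, i.e.\ a twisted perfect complex; and since $\mathcal V_k$ is already $X$-flat, over $\mathbb{G}_{m,\tilde X}^i$ its strict transform agrees with its total transform, so $\tilde{\mathcal W}_k|_{\mathbb{G}_{m,\tilde X}^i}\cong\rho^*\mathcal V_k$. Hence $\rho^*\beta=\sum_k(-1)^k[\rho^*\mathcal V_k]$ is the restriction of $\sum_k(-1)^k[\tilde{\mathcal W}_k]\in K_0^{\rho^*\azu}(\aff_{\tilde X}^i)$, and the first paragraph gives $\rho^*\gamma=0$.

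The hard part, I expect, is everything to do with the two flattenings: arranging that $\beta$ is represented by genuine $X$-flat twisted coherent sheaves (rather than a mere twisted perfect complex, whose homology sheaves need not themselves be perfect), and then that their extensions across the open immersion become perfect, all while tracking the $\azu$-module structures through strict transforms. The linchpin is Lemma \ref{perfection_transference}: it is what permits the repeated passage between twisted perfect complexes and coherent sheaves with flatness arguments, and it is exactly the reason one cannot replace $\azu$ by its non-smooth truncation $\pi_0(\azu)$ to simplify matters.
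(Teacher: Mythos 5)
Your overall strategy (lift $\gamma$ to $K_0^\azu(\mathbb{G}_{m,X}^i)$ via the iterated fundamental sequence, then blow up so that the representative extends to $\aff^i_{\tilde X}$) is the paper's strategy, and your second stage --- extend via Lemma \ref{twisted_extension}, flatten by Raynaud--Gruson, conclude perfectness from flatness over the base plus Lemma \ref{perfection_transference} --- matches the paper's Lemma \ref{base_platification} in substance. But your first reduction has a genuine gap. You claim that after flattening the homology sheaves $H_k(\mathcal P)$ and passing to strict transforms $\mathcal V_k$, one may assume $\beta=\sum_k(-1)^k[\mathcal V_k]$. Two things go wrong. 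First, the decomposition $[\mathcal P]=\sum_k(-1)^k[H_k(\mathcal P)]$ is not available in $K_0$ of perfect complexes before any blow-up: on a non-regular base the homology sheaves of a perfect complex need not be perfect (e.g.\ $R\xrightarrow{x}R$ over $R=k[x]/(x^2)$), so the classes $[H_k(\mathcal P)]$ do not exist where you need them. Second, even granting such a decomposition, the pullback of $\beta$ along the blow-up is $[L\tilde\rho^*\mathcal P]$, and $L\tilde\rho^*H_k(\mathcal P)$ is neither the strict transform nor the naive pullback of $H_k(\mathcal P)$: it carries higher $\torsheaf$ terms and differs from the strict transform by classes supported on the exceptional locus, with no reason for these corrections to cancel. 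You flag this as ``the hard part'' at the end, but the argument you give does not actually address it.

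This is precisely the difficulty the paper's induction is built to avoid. Rather than splitting $\mathcal P$ into all of its homology at once, the paper peels off only the bottom homology $H_0$ at each stage, first applying Lemma 6.5(iv) of Kerz--Strunk--Tamme to find a projective birational morphism making $H_0$ of tor-dimension $\le 1$ over the base. That condition does double duty: it lets Lemma \ref{base_platification} make $H_0$ perfect and extendable after one more blow-up, and it guarantees that the derived pullback of $H_0$ remains discrete, so the complementary fiber $F_\bullet$ in the triangle $F_\bullet\to\tilde\rho^*P_\bullet\to H_0$ has strictly smaller homology range and the induction terminates. To repair your proof you would need to replace your one-shot ``flatten all $H_k$ and take strict transforms'' step with some such dévissage (or otherwise justify a derived flattening of the whole complex); as written, the identity $\rho^*\beta=\sum_k(-1)^k[\rho^*\mathcal V_k]$ is unsupported.
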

\begin{proof}
  We fix a diagram of schemes over $X$
  \[
    \begin{tikzcd}
      \mathbb{G}_{m, X}^i \ar[dr, "\pitor"'] \ar[rr, "j"] & & \ar[ld, "\piaff"] \aff_X^i  \\
      & X & 
    \end{tikzcd}.
  \]
  For any morphism $f: Y_1 \rightarrow Y_2$, we let $\tilde{f}: \mathbb{G}_{m, Y_1}^i
  \rightarrow \mathbb{G}_{m, Y_2}^i$ denote the pullback.   Lift $\gamma$ to a
  $K_0^\azu(\mathbb{G}_{m,X}^i)$-class $[P_\bullet]$, with $P_\bullet$ some
  $\pitor^*\azu$-twisted perfect complexes on $\mathbb{G}_{m,X}^i$. \\
  
  \textit{The Induction Step}: \\
  
We induct on the range of homology of $P_\bullet$. As $\pitor^*\azu$
  is a sheaf of proper quasi-coherent dg-algebras, $P_\bullet$ is perfect on $\mathbb{G}_{m,X}^i$ by Lemma \ref{perfection_transference}. Since $\mathbb{G}_{m,X}^i$ has an ample
  family of line bundles, we may choose $P_\bullet$ to be strict perfect without changing the
  quasi-isomorphism class. After some (de)suspension, we may assume $P_\bullet$ is connective
  as this only alters the $K_0$-class by $\pm 1$.  For the lowest nontrivial differential of $P_\bullet$, $d_\low$, we utilize part (iv) of Lemma 6.5
  of \cite{kerz_strunk_tamme} (with the morphism $\mathbb{G}_{m, X}^i \rightarrow X$) to construct a
  projective birational morphism $\rho: X_1 \rightarrow X$ so that $\coker(\tilde{\rho}^*
  d_\low)$ ($=H_0(\tilde{\rho}^*P_\bullet)$) has tor-dimension $\leq 1$ over $X_1$. Consider the
  following distinguished triangle of $\tilde{\rho}^*\pitor^*\azu$-complexes on $\mathbb{G}_{m, X_1}^i$
  \[
    F_\bullet \rightarrow \tilde{\rho}^*P_\bullet \rightarrow H_0(\tilde{\rho}^*P_\bullet) \cong \coker \tilde{\rho}^*d_\low.
  \]
  In Lemma \ref{base_platification} below, we cover the base induction step, when the homology is concentrated in
  a single degree. Using this, construct a projective birational morphism $\phi: X_2\rightarrow X_1$ such that
  $L\tilde{\phi}^*H_0(\tilde{\rho}^*P_\bullet)$ is a perfect complex and is the restriction of a perfect complex
  from $\aff^i_{X_2}$. By two out of three, $L\tilde{\phi}^*F_\bullet$ is perfect and
  $[\tilde{\phi}^*\tilde{\rho^*}P_\bullet] = [L\tilde{\phi}^*F_\bullet] + [L\tilde{\phi}^*H_0(\tilde{\rho}^*P_\bullet)]$ in
  $K_0^\azu(\mathbb{G}_{m, X_2}^i)$. We then repeat the entire induction step with $L\tilde{\phi}^*F_\bullet$.

We need the induction will terminate, which is the purpose of the first projective
birational morphism of each step. Since  $\coker(\tilde{\rho}^*
  d_\low)$ has tor-dimension $\leq 1$ over $X_1$, by \cite{kerz_strunk_tamme}[Lemma 6.5],
  $L\tilde{\phi}^*\coker (\tilde{\rho}^*d_\low) \cong \tilde{\phi}^* \coker
  (\tilde{\rho}^*d_\low)$. This implies $L \tilde{\phi}^*F_\bullet$ will have no homology outside the original range
  of homology
  of $P_\bullet$. Since $\tilde{\phi}^* \coker (\tilde{\rho}^*d_\low) \cong \coker
  (\tilde{\phi}^*\tilde{\rho}^*d_\low)$, this guarantees $H_0(L\tilde{\phi}^*F_\bullet) = 0$, so the
  homology of $L\tilde{\phi}^*F_\bullet$ lies in a strictly smaller range than
  $\tilde{\phi}^*\tilde{\rho}^*P_\bullet$. Proposition \ref{platification} follows from the next
  lemma.
  \end{proof}

  \begin{lemma}\label{base_platification}
    Let $X$ be a reduced scheme which is quasi-projective over a Noetherian affine
    scheme. Let $\azu$ be a sheaf of smooth proper connective quasi-coherent dg-algebras on $X$. Let
    $\sheaffont{N}$ be a discrete $\pitor^*\azu$-module which is coherent on $\mathbb{G}_{m,
      X}^i$. Then there exists a birational blow-up $\phi: \tilde{X} \rightarrow X$ so that
    $\tilde{\phi}^*\sheaffont{N}$ is perfect over $\tilde{\phi}^*\pitor^*\azu$ on $\mathbb{G}_{m,
      \tilde{X}}$ and is the 
    restriction of a perfect complex over the pullback of $\azu$ to $\aff^i_{\tilde{X}}$.
  \end{lemma}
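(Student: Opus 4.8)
The plan is to reduce to a flattening (platification) statement over the base and then invoke the smooth-proper transfer of perfection (Lemma \ref{perfection_transference}). First I would record that $\sheaffont{N}$, being a discrete $\pitor^*\azu$-module, carries an action that factors through $H_0(\pitor^*\azu) \cong \pitor^*H_0(\azu)$, exactly as in the proof of Lemma \ref{twisted_extension}; this lets me replace $\azu$ by the discrete sheaf of $\OO_X$-algebras $\mathcal{B} := H_0(\azu)$ when discussing $\sheaffont{N}$ itself, keeping in mind that I still want the conclusion phrased in terms of the original $\azu$. Since $\azu$ is proper, $\mathcal{B}$ is coherent over $\OO_X$, so $\sheaffont{N}$ is in particular a coherent $\OO_{\mathbb{G}^i_{m,X}}$-module.

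Next I would apply the Raynaud–Gruson/Kerz–Strunk–Tamme flattening result (\cite{kerz_strunk_tamme}, Lemma 6.5, parts (i)–(iv)) to the coherent sheaf $\sheaffont{N}$ relative to the structure morphism $\pitor \colon \mathbb{G}^i_{m,X} \to X$: since $X$ is reduced and quasi-projective over a Noetherian affine scheme, there is a projective birational morphism $\phi \colon \tilde{X} \to X$, which we may arrange to be a blow-up in a nowhere-dense closed subscheme, such that the strict transform of $\sheaffont{N}$ along $\tilde{\phi}$ becomes flat — hence, being coherent and flat over the Noetherian base, of finite tor-dimension — over $\tilde{X}$. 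I would then observe that on the locus where $\phi$ is an isomorphism the strict transform agrees with $L\tilde{\phi}^*\sheaffont{N}$, and in general $L\tilde{\phi}^*\sheaffont{N}$ is computed by a finite flat (locally free) resolution pulled back, so $L\tilde{\phi}^*\sheaffont{N}$ is a perfect complex on $\mathbb{G}^i_{m,\tilde{X}}$ as an $\OO$-complex. By Lemma \ref{perfection_transference} applied to the smooth proper sheaf $\tilde{\phi}^*\pitor^*\azu$ (perfection is local, and smooth-properness is preserved by pullback), $L\tilde{\phi}^*\sheaffont{N}$ is then perfect over $\tilde{\phi}^*\pitor^*\azu$.

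It remains to produce the extension across $j \colon \mathbb{G}^i_{m,\tilde{X}} \hookrightarrow \aff^i_{\tilde{X}}$. Here I would use that, after the flattening, $L\tilde{\phi}^*\sheaffont{N}$ is quasi-isomorphic to a finite complex of finitely generated locally free $\tilde{\phi}^*\pitor^*\azu$-modules on $\mathbb{G}^i_{m,\tilde{X}}$ — equivalently, via Lemma \ref{perfection_transference}, a strict perfect $\OO$-complex with compatible module structure — and then extend it term by term to $\aff^i_{\tilde{X}}$ using Lemma \ref{twisted_extension} (applied with the open immersion $j$, over the base $\aff^i_{\tilde{X}}$ with its pulled-back dg-algebra, which is still proper connective) together with the fact that one can extend the boundary maps after possibly shrinking, since $\aff^i_{\tilde{X}}$ has an ample family of line bundles so that maps of perfect complexes lift along the open restriction up to a cofinal system. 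Assembling these extensions gives a perfect complex on $\aff^i_{\tilde{X}}$ restricting to $L\tilde{\phi}^*\sheaffont{N}$.

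The main obstacle is the flattening step: one must apply the Raynaud–Gruson-type result in the exact form needed (projective birational, and compatible with the bookkeeping of tor-dimension that the proof of Proposition \ref{platification} relies on), and carefully check that the resulting strict transform computes the \emph{derived} pullback, so that perfectness as an $\OO$-complex — and hence, via Lemma \ref{perfection_transference}, as an $\azu$-module — is genuinely obtained rather than just flatness of a naive pullback. The extension-to-$\aff^i$ step is comparatively soft, being a twisted analogue of the classical fact that perfect complexes on an open subscheme with an ample family extend, but it does require care that the successive term-by-term extensions of the differentials can be chosen compatibly.
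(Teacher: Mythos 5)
Your proposal inverts the order of the paper's two main steps (the paper extends the coherent module to $\aff^i_X$ \emph{first}, then flattens on $\aff^i$; you flatten on $\mathbb{G}^i_m$ first, then try to extend the resulting perfect complex), and this inversion creates two genuine gaps. First, applying platification directly to $\sheaffont{N}$ only controls the \emph{strict transform} of $\sheaffont{N}$, i.e.\ the quotient of $\tilde{\phi}^*\sheaffont{N}$ by sections supported on the exceptional locus; it says nothing directly about $\tilde{\phi}^*\sheaffont{N}$ (which is what the lemma asserts is perfect) or about $L\tilde{\phi}^*\sheaffont{N}$. Your justification that ``$L\tilde{\phi}^*\sheaffont{N}$ is computed by a finite flat (locally free) resolution pulled back'' presupposes that $\sheaffont{N}$ is already perfect on $\mathbb{G}^i_{m,X}$, which is exactly what is not assumed — $\sheaffont{N}$ is merely coherent, and the blow-up is needed precisely to repair this. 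You flag this as ``the main obstacle'' but do not resolve it. The paper's resolution is the specific device you are missing: choose a surjection $\sheaffont{F} \surj \sheaffont{M}$ from a vector bundle on $\aff^i_X$ with kernel $\sheaffont{K}$, flatten $\sheaffont{K}$ (not $\sheaffont{M}$), and then run the $\torsheaf_1$ diagram chase — using that $(p^*\sheaffont{F})_Z = 0$ because $p^*\sheaffont{F}$ is a vector bundle — to show the \emph{naive} pullback $p^*\sheaffont{M}$ is resolved by $p^*\sheaffont{F}$ and the strict transform of $\sheaffont{K}$, hence has finite Tor-amplitude.

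Second, and more seriously, the step you call ``comparatively soft'' cannot work as stated. Extending a perfect complex from $\mathbb{G}^i_{m,\tilde{X}}$ to $\aff^i_{\tilde{X}}$ term by term with its differentials is obstructed: by Thomason--Trobaugh, a perfect complex on an open subscheme extends to a perfect complex on the ambient scheme only if its $K_0$-class lifts, and the cokernel of $K_0^\azu(\aff^i_{\tilde{X}}) \to K_0^\azu(\mathbb{G}^i_{m,\tilde{X}})$ surjects onto $K^\azu_{-i}(\tilde{X})$ by the very sequence (\ref{fundamental}) built in Section 3. If every perfect complex on $\mathbb{G}^i_{m,\tilde{X}}$ extended, all negative twisted $K$-groups of $\tilde{X}$ would vanish and the entire argument of the paper would be vacuous. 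Lemma \ref{twisted_extension} extends \emph{discrete finitely generated modules}, not complexes, and that is why the paper invokes it at the very start — the coherent module $\sheaffont{N}$ always extends to a coherent $\sheaffont{M}$ on $\aff^i_X$, and the perfect complex is then manufactured on $\aff^i_{\tilde{X}}$ directly (as $p^*\sheaffont{M}$) rather than extended from the torus. You should restructure your argument to follow this order.
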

  \begin{proof}
    Using Lemma \ref{twisted_extension}, extend $\sheaffont{N}$ from $\mathbb{G}_{m, X}^i$ to a
    coherent $\piaff^*\azu$-module $\sheaffont{M}$ on $\mathbb{A}_X^i$. Using the ample family, choose a resolution in $\OO_{\aff_X^i}$-modules of the form
  \[
    0 \rightarrow \sheaffont{K} \rightarrow \sheaffont{F} \rightarrow \sheaffont{M} \rightarrow 0
  \]
      where $\sheaffont{F}$ is a vector bundle and $\sheaffont{K}$ is the coherent kernel. As $X$ is
      reduced, $\sheaffont{K}$ is flat over some dense
      open set $U$ of $X$. By platification par \'eclatement (see Theorem 5.2.2 of Raynaud--Gruson \cite{raynaud_gruson}), there is a $U$-admissable
      blow-up $\phi: \tilde{X} \rightarrow X$ so that the strict transform of $\sheaffont{K}$ along the
      pullback morphism $p: \aff_{\tilde{X}}^i \rightarrow \aff_X^i$ is flat over $\tilde{X}$.
      
        We now show the pullback $p^*\sheaffont{M}$ is perfect as a $p^*\piaff^*\azu$-module. Let $j: \aff_U^i \rightarrow \aff_{\tilde{X}}^i$ be the inclusion of the open set and $Z$ the closed complement. For any sheaf of modules
        $\sheaffont{G}$ on $\aff_{\tilde{X}}^i$, we let $\sheaffont{G}_Z$ denote the subsheaf of
        sections supported on $Z$. We have a short exact sequence natural in $\sheaffont{G}$
        \[
          0 \rightarrow \sheaffont{G}_Z \rightarrow \sheaffont{G} \rightarrow j^{st}\sheaffont{G} \rightarrow 0.
        \]
        We also obtain the following exact sequence of sheaves of abelian groups via pullback
        \[
          0 \rightarrow \torsheaf_1^{p^{-1}\OO_{\aff^i_X}}(p^{-1}\sheaffont{M}, \OO_{\aff^i_{\tilde{X}}})
          \rightarrow p^*\sheaffont{K} \rightarrow p^*\sheaffont{F} \rightarrow p^*\sheaffont{M}
          \rightarrow 0.
        \]
        To make our notation clearer, we set $\sheaffont{T} = \torsheaf_1^{
          p^{-1}\OO_{\aff^i_X}}(p^{-1}\sheaffont{M}, \OO_{\aff^i_{\tilde{X}}})$. We flesh both these
        exact sequences out into a (nonexact) commutative diagram of $p^{-1}\OO_{\aff^i_X}$-modules
        \[
          \begin{tikzcd}
            & 0 \ar[d] & 0 \ar[d] & 0 \ar[d] & \\
            0 \ar[r] & \sheaffont{T}_Z
            \ar[r]\ar[d] & \sheaffont{T} \ar[d]\ar[r] & \ar[d] j^{st}\sheaffont{T} \ar[r] & 0 \\
            0 \ar[r] & (p^*\sheaffont{K})_Z \ar[r] \ar[d] & p^*\sheaffont{K} \ar[r] \ar[d] & j^{st}p^*\sheaffont{K}
            \ar[r] \ar[d] & 0\\
            0 \ar[r] & (p^*\sheaffont{F})_Z\ar[r] \ar[d] & p^*\sheaffont{F} \ar[r] \ar[d] &j^{st}p^*\sheaffont{F} \ar[r]
            \ar[d] & 0\\
            0 \ar[r] &(p^*\sheaffont{M})_Z \ar[r] \ar[d] & p^*\sheaffont{M} \ar[r] \ar[d] & j^{st}p^*\sheaffont{M} \ar[r] \ar[d] & 0\\
            & 0 & 0 & 0 &
          \end{tikzcd}.
        \]
        We observe that every row and the middle column is exact. The first map in the left column
        is an injection and the last map in the right column is a surjection. Since $p^*\sheaffont{F}$ is flat, we have $(p^*\sheaffont{F})_Z =
        0$. This induces a lifting of the injection
        \[
          \begin{tikzcd}
            \sheaffont{T}_Z \ar[d] \ar[r] & \sheaffont{T} \ar[d] \\
            (p^*\sheaffont{K})_Z \ar[r] \ar[ur, dashed] & p^*\sheaffont{K}
          \end{tikzcd}.
        \]
        We finish the proof by showing $j^*\torsheaf_1^{p^{-1}\OO_{\aff^i_X}}(p^{-1}\sheaffont{M},
        \OO_{\aff^i_{\tilde{X}}}) = 0$. Since $j: \aff^i_U \rightarrow \aff^i_{\tilde{X}}$ is flat, the sheaf is isomorphic to
        $\torsheaf_1^{\aff^i_U}(j^*p^{-1}\sheaffont{M}, j^*\OO_{\aff^i_{\tilde{X}}})$ and
        $j^*\OO_{\aff^i_{\tilde{X}}} \cong \OO_{\aff^i_U}$.  Our big diagram can be rewritten as
                \[
          \begin{tikzcd}
            & 0 \ar[d] & 0 \ar[d] & 0 \ar[d] & \\
            0 \ar[r] & \sheaffont{T}_Z 
            \ar[r, "\cong"]\ar[d, "\cong"] & \sheaffont{T} \ar[d]\ar[r] & \ar[d] 0 \ar[r] & 0 \\
            0 \ar[r] & (p^*\sheaffont{K})_Z \ar[r] \ar[d] & p^*\sheaffont{K} \ar[r] \ar[d] & j^{st}p^*\sheaffont{K}
            \ar[r] \ar[d] & 0\\
            0 \ar[r] & 0\ar[r] \ar[d] & p^*\sheaffont{F} \ar[r] \ar[d] &j^{st}p^*\sheaffont{F} \ar[r]
            \ar[d] & 0\\
            0 \ar[r] &(p^*\sheaffont{M})_Z \ar[r] \ar[d] & p^*\sheaffont{M} \ar[r] \ar[d] & j^{st}p^*\sheaffont{M} \ar[r] \ar[d] & 0\\
            & 0 & 0 & 0 &
          \end{tikzcd}
        \]
        and we can glue together to get a flat resolution of $p^*\sheaffont{M}$ as an $\OO_{\aff^i_{\tilde{X}}}$-module
        \[
          0 \rightarrow j^{st}p^*\sheaffont{K} \rightarrow p^*\sheaffont{F} \rightarrow p^*\sheaffont{M}
          \rightarrow 0
        \]
        implying globally finite Tor-amplitude. It remains to show the complex is
        pseudo-coherent. This follows since $\aff^i_{\tilde{X}}$ is Noetherian and
        $p^*\sheaffont{M}$ is coherent. Since $p^*\piaff^*\azu$ is a sheaf of smooth quasi-coherent dg-algebras
        over $\OO_{\aff^i_{\tilde{X}}}$, the complex $p^*\sheaffont{M}$ is perfect over
        $p^*\piaff^*\azu$ by Lemma \ref{perfection_transference}. By commutativity, $p^*\sheaffont{M}$ restricts to $\tilde{\phi}^*\sheaffont{N}$ on $\mathbb{G}_{m, \tilde{X}}^i$. This
        completes the proof of Proposition \ref{platification}.
      \end{proof}
    We will need a relative version of Proposition \ref{platification}.
      \begin{corollary}
        \label{relative_platification}
        Let $f: S \rightarrow X$ be a smooth quasi-projective morphism of Noetherian schemes with
        $X$ reduced and  quasi-projective over a Noetherian base ring. Let $\azu$ be a sheaf of smooth proper
        connective quasi-coherent dg-algebras over $X$ and consider a negative twisted $K$-theory class $\gamma \in
        K_{i}^\azu(S)$ for $i < 0$. Then there exists a projective birational morphism $\rho: \tilde{X}
        \rightarrow X$ such that, under the pullback of the pullback morphism, $\rho_S^*\gamma = 0$.
      \end{corollary}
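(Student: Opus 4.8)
The plan is to re-run the proof of Proposition~\ref{platification} with $S$ in the role of the ambient space on which the geometric cycle for $\gamma$ lives, while performing every blow-up on the base $X$. Writing $n=-i>0$, I would first iterate the fundamental exact sequence (\ref{fundamental}) --- valid on any quasi-compact quasi-separated scheme, here applied to $S$ twisted by the pullback of $\azu$ --- to lift $\gamma$ to a class $[P_\bullet]\in K_0^\azu(\mathbb{G}_{m,S}^n)$ with $P_\bullet$ a twisted perfect complex on $\mathbb{G}_{m,S}^n$. As in Section~3 it then suffices to produce a projective birational $\rho\colon\tilde X\to X$ so that, setting $\tilde S:=\tilde X\times_X S$, the pullback of $[P_\bullet]$ to $\mathbb{G}_{m,\tilde S}^n$ becomes a sum of classes each of which is the restriction of a perfect complex from $\aff_{\tilde S}^n$; such classes map to zero under the iterated construction of Section~3, so $\rho_S^*\gamma=0$.

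Next I would check that $\mathbb{G}_{m,S}^n$ and $\aff_S^n$ satisfy the hypotheses needed to run the proofs of Proposition~\ref{platification} and Lemma~\ref{base_platification} in place of $\mathbb{G}_{m,X}^i$ and $\aff_X^i$: the scheme $S$ is reduced (being smooth over the reduced $X$) and stays so after base change along any blow-up of $X$, so all the $\aff_{\tilde S}^n$ occurring are reduced; $S$ is quasi-projective over the Noetherian base ring $A$ (compose the quasi-projective morphisms $S\to X\to\Spec A$ through a Segre embedding), hence so are $\aff_S^n$ and $\mathbb{G}_{m,S}^n$, which therefore carry ample families of line bundles; $\aff_S^n$ is Noetherian; and smoothness, properness and connectivity of $\azu$ are preserved under pullback to $S$ and to $\aff_S^n$, so Lemmas~\ref{perfection_transference} and~\ref{twisted_extension} apply there verbatim.

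With this in hand the induction of Proposition~\ref{platification} and the single-degree case of Lemma~\ref{base_platification} go through word for word, \emph{except} that the two flattening steps must be taken relative to $X$ rather than to $S$. For the tor-dimension reduction I would apply part~(iv) of Lemma~6.5 of \cite{kerz_strunk_tamme} to the composite finite-type morphism $\mathbb{G}_{m,S}^n\to X$ (and the lowest differential of a strict perfect model of the relevant complex), which produces a projective birational morphism \emph{of $X$}. For the base case, after extending $\sheaffont N$ to a coherent $\piaff^*\azu$-module $\sheaffont M$ on $\aff_S^n$ and choosing a resolution $0\to\sheaffont K\to\sheaffont F\to\sheaffont M\to 0$ of $\OO_{\aff_S^n}$-modules with $\sheaffont F$ a vector bundle, I would invoke generic flatness for the finite-type morphism $\aff_S^n\to X$ and the coherent sheaf $\sheaffont K$ to obtain a dense open $U\subseteq X$ over which $\sheaffont K$ is flat (here reducedness of $X$ is used), and then platification par \'eclatement (Theorem~5.2.2 of \cite{raynaud_gruson}) for $\aff_S^n\to X$ and $\sheaffont K$ to get a $U$-admissible blow-up $\tilde X\to X$ flattening the strict transform of $\sheaffont K$ over $\tilde X$. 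Over $f^{-1}(U)$ the base change $\tilde S\to S$ is an isomorphism, so the $\torsheaf_1$-vanishing and the ensuing diagram chase of Lemma~\ref{base_platification} are unaffected, and one concludes as there that the pullback of $\sheaffont M$ has globally bounded tor-amplitude and is pseudo-coherent, hence perfect over the pulled-back algebra by Lemma~\ref{perfection_transference}; restricting to $\mathbb{G}_{m,\tilde S}^n$ recovers a perfect complex which is the restriction of one from $\aff_{\tilde S}^n$. Composing the finitely many blow-ups gives $\rho\colon\tilde X\to X$, an isomorphism over a dense open of $X$ (the intersection of the finitely many loci), hence birational; its base change $\rho_S$ is projective and an isomorphism over $f^{-1}(U)$, which is dense in $S$ because $f$ is flat, hence open --- so $\rho_S$ is birational and, by construction, kills $\gamma$.

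The only content beyond Proposition~\ref{platification} is the observation that both Lemma~6.5 of \cite{kerz_strunk_tamme} and Raynaud--Gruson platification accept the base along which one blows up as an input, so nothing forces that base to be $S$ and we may keep it equal to $X$ throughout. I expect the genuine (if minor) obstacle to be making sure that flatness of $\sheaffont K$ relative to $X$ over a dense open still feeds correctly into the diagram chase --- it does, since that chase only uses that $\tilde S\to S$ is an isomorphism over $f^{-1}(U)$ --- together with the routine verifications that quasi-projectivity, ample families, Noetherianness, and the smooth/proper/connective conditions pass to $\aff_S^n$ over $X$.
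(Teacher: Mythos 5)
Your proposal is correct and follows essentially the same route as the paper: the paper's proof is a brief sketch saying exactly what you carry out in detail, namely re-run the induction of Proposition~\ref{platification} with the geometric cycle living on $\mathbb{G}_{m,S}^n$ while taking all blow-ups on the base $X$, using the relative form of Lemma~6.5 of \cite{kerz_strunk_tamme} and of platification par \'eclatement (the paper cites Proposition~5 of \cite{kerz_strunk} for the latter, which is the same observation you make directly from Raynaud--Gruson), with reducedness of $X$ supplying generic flatness over the base.
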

      \begin{proof}
        We will briefly check that we can run the induction argument in the proof of Proposition
        \ref{platification}. The assumptions of this corollary are invariant under pullback along
        projective birational morphisms $\tilde{X} \rightarrow X$. We need to ensure we can select
        projective birational morphisms to our base $X$. Lemma 6.5 of 
        Kerz--Strunk--Tamme \cite{kerz_strunk_tamme} is stated in a relative setting. The proof also
        relies on platification par \'{e}clatement. This can still be applied in our relative
        setting as $X$ is reduced (see Proposition 5 of Kerz--Strunk \cite{kerz_strunk}).
      \end{proof}

      \section{Twisted Weibel's conjecture}

        We now prove Theorem \ref{main_theorem} and an extension across a smooth affine morphism. We
      begin with the base induction step for both theorems. Kerz--Strunk \cite{kerz_strunk} use a
      sheaf cohomology result of Grothendieck along with a spectral sequence argument to show
      vanishing for a Zariski sheaf of spectra can be reduced to the setting of local ring.
      \begin{proposition}\label{base_case}
        Let $R$ be a regular Noetherian ring of Krull dimension $d$ over a local Artinian ring $k$. Let $\azu$
        be a smooth proper connective dg-algebra over $R$, then $K^\azu_i(R) = 0$ for $i < 0$. 
      \end{proposition}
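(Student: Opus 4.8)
The plan is to use the fundamental exact sequence (\ref{fundamental}) to reduce the statement to a surjectivity assertion for $K_0^\azu$, and then to identify $K_0^\azu$ of a regular ring with a Grothendieck group of finitely generated modules, where that surjectivity is transparent. Regularity of $R$ enters only through the fact that a complex over $R$ is perfect if and only if it is bounded with finitely generated homology; together with Lemma \ref{perfection_transference}, this is what turns $\Perf(\azu)$ into a bounded derived category.

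I would first pin down the heart of $\Perf(\azu)$. Since $R$ is regular Noetherian, $\Perf(R)$ is the category $D^b_{coh}(R)$ of bounded complexes with finitely generated homology, and by Lemma \ref{perfection_transference} an object of $D(\azu)$ is perfect over $\azu$ exactly when it is perfect over $R$; hence $\Perf(\azu)$ is the full subcategory $D^b_{coh}(\azu)\subseteq D(\azu)$ of modules whose underlying $R$-complex lies in $D^b_{coh}(R)$. As $\azu$ is proper, $\pi_0(\azu)$ is module-finite over $R$, hence a (two-sided) Noetherian ring, and for a $\pi_0(\azu)$-module ``finitely generated over $R$'' agrees with ``finitely generated over $\pi_0(\azu)$''; in particular every finitely generated $\pi_0(\azu)$-module is perfect over $R$, hence over $\azu$. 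Because $\azu$ is connective, $D(\azu)$ carries its standard $t$-structure with heart $\Mod(\pi_0\azu)$, and this $t$-structure restricts to $\Perf(\azu)=D^b_{coh}(\azu)$ (truncation only discards homology groups), is bounded there, and has heart the Noetherian abelian category $\mathcal{M}(\pi_0\azu)$ of finitely generated $\pi_0(\azu)$-modules. Since $K_0$ of a stable $\infty$-category with a bounded $t$-structure is $K_0$ of its heart, this yields a natural isomorphism $K_0^\azu(R)\cong G_0(\pi_0\azu)$, where $G$ denotes $K$-theory of finitely generated modules. Smoothness, properness and connectivity are preserved by base change, so the same applies with $R$ replaced by $R[t]$, $R[t^{-1}]$ or $R[t^{\pm}]$ (and iterates), and the isomorphism is compatible with flat restriction because $-\otimes_{R[t]}R[t^{\pm}]$ is $t$-exact and $\pi_0(\azu\otimes_R R[t^{\pm}])\cong\pi_0(\azu)[t]\otimes_{R[t]}R[t^{\pm}]$.

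I would then run the induction. By (\ref{fundamental}) with $X=\Spec R$, the group $K_{-1}^\azu(R)$ is the cokernel of the difference of the two restriction maps $K_0^\azu(R[t])\oplus K_0^\azu(R[t^{-1}])\to K_0^\azu(R[t^{\pm}])$. Under the identification above, restriction along $\Spec R[t^{\pm}]\to\Spec R[t]$ becomes the map
\[
  G_0(\pi_0(\azu)[t])\longrightarrow G_0(\pi_0(\azu)[t^{\pm}]),\qquad [M]\longmapsto[M\otimes_{\pi_0(\azu)[t]}\pi_0(\azu)[t^{\pm}]]
\]
which is surjective: for a finitely generated $\pi_0(\azu)[t^{\pm}]$-module $N$, the $\pi_0(\azu)[t]$-submodule $N_0\subseteq N$ generated by a finite $\pi_0(\azu)[t^{\pm}]$-generating set satisfies $N_0\otimes_{\pi_0(\azu)[t]}\pi_0(\azu)[t^{\pm}]\cong N$, so $[N]$ is the image of $[N_0]$. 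Hence $K_{-1}^\azu(R)=0$. This argument is insensitive to the choice of regular Noetherian ring carrying a smooth proper connective dg-algebra, so it also gives the vanishing of $K_{-1}^\azu$ on $R[t]$, $R[t^{-1}]$ and $R[t^{\pm}]$; plugging that into (\ref{fundamental}) for these rings shows $K_{-2}^\azu(R)=0$, and likewise for every such ring. Induction on $n$, applying (\ref{fundamental}) at each stage, yields $K_{-n}^\azu(R)=0$ for all $n\geq 1$.

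The main obstacle is the bookkeeping of the second paragraph: one must check that the equivalence $\Perf(\azu)\simeq D^b_{coh}(\azu)$ sends the pullback functors of (\ref{fundamental}) to $t$-exact functors inducing flat pullback on hearts, so that the resulting map of $K_0$-groups really is the $G_0$-map to which the ``clear denominators'' argument applies, and that the noncommutativity is confined to the module-finite --- hence Noetherian --- ring $\pi_0(\azu)$, where the $K$-theory of finitely generated modules behaves as usual. Everything else is the $K_0$-level theorem of the heart and the fundamental exact sequence from Section 3. (The hypothesis that $R$ is an algebra over a local Artinian ring is not used above; it is present because the proposition will be applied in this form when proving Theorems \ref{main_theorem} and \ref{relative_main_theorem}.)
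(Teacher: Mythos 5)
Your proof is correct, but it takes a genuinely different route from the paper. The paper's proof is short and top-down: it reduces to $k$ a field via reduction invariance, invokes Raedschelders--Stevenson (Proposition 5.4) to get a bounded $t$-structure on $\Perf(\azu)$ with Noetherian heart, and then cites Antieau--Gepner--Heller's Theorem 1.2, which kills all negative $K$-groups of any stable $\infty$-category admitting such a $t$-structure. You instead reprove the relevant special case of that theorem by hand, in the style of Bass: you build the bounded $t$-structure yourself from regularity of $R$ (via $\Perf(R)=D^b_{\mathrm{coh}}(R)$ together with Lemma \ref{perfection_transference}), identify $K_0^\azu$ with $G_0(\pi_0\azu)$ by the $K_0$-level theorem of the heart, and run the induction through the fundamental sequence (\ref{fundamental}) using surjectivity of $G_0(\pi_0(\azu)[t])\to G_0(\pi_0(\azu)[t^{\pm}])$. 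What your approach buys: it is self-contained modulo classical facts, it never reduces to a field (correctly observing that the Artinian hypothesis is not needed), and it localizes the use of smoothness and properness to the single point where the heart of $\Perf(\azu)$ is identified with all finitely generated $\pi_0(\azu)$-modules. It even sidesteps a slightly loose point in the paper's write-up, where the heart is said to be Noetherian because $H_0(\azu)$ is finite-dimensional over $k$ (true only for $d=0$); the honest reason, which your argument makes explicit, is that $\pi_0(\azu)$ is module-finite over the Noetherian ring $R$. What the paper's approach buys is brevity and robustness: AGH's theorem needs no regularity of $R$ at all, only a bounded $t$-structure with Noetherian heart, which is why the paper can lean on it so lightly. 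Both arguments are valid proofs of the proposition as stated.
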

      \begin{proof}
        By Proposition \ref{relative_reduction_invariance}, we may assume $k$ is a field. Proposition 5.4 of \cite{raedschelders_stevenson} shows that the t-structure on $D(\azu)$
        restricts to a t-structure on $\Perf(\azu)$, which is observably bounded. The heart is the
        category of finitely-generated modules over $
        H_0(\azu)$. As $H_0(\azu)$ is finite-dimensional over $k$, this is a Noetherian abelian
        category. By Theorem 1.2 of Antieau--Gepner--Heller
        \cite{antieau_gepner_heller}), the negative $K$-theory vanishes.
      \end{proof}
      \begin{customthm}{1.1}
        Let $X$ be a Noetherian scheme of Krull dimension $d$ and
        $\azu$ a sheaf of smooth proper connective quasi-coherent dg-algebras on $X$, then $K^\azu_{-i}(X)$
        vanishes for $i > d$.
      \end{customthm}
      \begin{proof}
    Proposition \ref{base_case} covers the base case so assume $d > 0$. By the Kerz--Strunk spectral
    sequence argument and Corollary
    \ref{scheme_reduction_invariance}, we may assume $X$ is a Noetherian reduced affine scheme.

    Choose a negative $K^\azu$-theory class $\gamma \in K^\azu_{-i}(X)$ for $i \geq \dim X +
    1$. Using Proposition \ref{platification}, construct a projective birational morphism that kills
    $\gamma$ and extend it to an abstract blow-up square
\[
  \begin{tikzcd}
    E \ar[d] \ar[r] & \tilde{X} \ar[d] \\
    Y \ar[r] & X
  \end{tikzcd}.
\]
By \cite[Theorem A.8]{land_tamme}, there is a Mayer-Vietoris exact sequence of pro-groups
\[
  \begin{tikzcd}[column sep=small]
    \cdots \ar[r] & \{ K_{-i+1}^\azu(E_n)\} \ar[r]  & K_{-i}^\azu(X) \ar[r] & K_{-i}^\azu(\tilde{X}) \oplus \{ K_{-i}^\azu(Y_n)\} \ar[r]& \{ K_{-i}^\azu(E_n)\} \ar[r] & \cdots
  \end{tikzcd}.
\]
When $i \geq \dim X + 1$, by induction every nonconstant pro-group vanishes and $K_{-i}^\azu(X) \cong K_{-i}^\azu(\tilde{X})$ showing $\gamma = 0$.
\end{proof}
By \cite[Theorem 3.15]{antieau_gepner}, we recover Weibel's vanishing for discrete Azumaya algebras.
\begin{corollary}
  For $X$ a Noetherian $d$-dimensional scheme and $\azu$ a quasi-coherent sheaf of discrete
  Azumaya algebras, then $K_{-i}^\azu(X) = 0$ for $i > d$.
\end{corollary}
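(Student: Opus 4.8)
The plan is to deduce the corollary directly from Theorem~\ref{main_theorem}: it is enough to observe that a quasi-coherent sheaf of discrete Azumaya algebras $\azu$ on $X$ is, in particular, a sheaf of smooth proper connective quasi-coherent dg-algebras, and then quote the main theorem. Connectivity is immediate, since a discrete sheaf of algebras is concentrated in homological degree $0$, and a quasi-coherent sheaf of associative $\OO_X$-algebras is an object of $\Alg_{\Eone}(D_{qc}(X))$; so the only content is smoothness and properness.

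There are two equivalent ways to check these. Concretely, both conditions are local on $X$, since perfection is a local property (as noted after Lemma~\ref{perfection_transference}), so one may pass to an \'etale cover on which $\azu \cong \mathcal{M}_n(\OO_X)$: there $\azu$ is a free $\OO_X$-module of rank $n^2$, hence perfect over $\OO_X$, which is properness; and $\azu^{op} \otimes_{\OO_X} \azu \cong \mathcal{M}_{n^2}(\OO_X)$ is Morita equivalent to $\OO_X$, under which equivalence the diagonal bimodule $\azu$ corresponds to the free rank-one module and is therefore perfect, which is smoothness. (Equivalently, a discrete Azumaya algebra is separable, hence finitely generated projective, hence perfect, over its enveloping algebra.) Abstractly, a discrete Azumaya algebra is a connective derived Azumaya algebra, and by \cite[Theorem~3.15]{antieau_gepner} the module category of a derived Azumaya algebra is an invertible object; invertible objects are dualizable, and by To\"en--Vaqui\'e \cite[Definition~2.4]{toen_vaquie} the dualizable module categories are exactly those of smooth proper dg-algebras.

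With the hypotheses of Theorem~\ref{main_theorem} verified, it applies and yields $K_{-i}^\azu(X) = 0$ for $i > d$. There is no real obstacle here; the only point requiring a little care is that smoothness and properness of a sheaf of dg-algebras are \'etale-local conditions, so that it is legitimate to verify them after trivializing $\azu$ on a cover, where it becomes a matrix algebra.
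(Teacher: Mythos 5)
Your proposal is correct and follows the same route as the paper, which simply deduces the corollary from Theorem~\ref{main_theorem} by citing \cite[Theorem~3.15]{antieau_gepner} to identify discrete Azumaya algebras as (connective) smooth proper dg-algebras. Your additional hands-on verification of smoothness and properness via \'etale-local trivialization to a matrix algebra is a valid elaboration of the same point rather than a different argument.
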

The next result nearly covers the K-regularity portion of Weibel's conjecture, but we are missing the
boundary case $K_{-d}^\azu(X) \cong K_{-d}^\azu(\aff^n_X)$.
\begin{theorem}
  \label{relative_main_theorem}
  Let $f: S \rightarrow X$ be a smooth affine morphism of Noetherian schemes and $\azu$ a sheaf of smooth
  proper connective quasi-coherent dg-algebras on $X$. Then $K_{-i}^\azu(f) = 0$ for $i > \dim X + 1$.
\end{theorem}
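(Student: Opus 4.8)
The plan is to mimic the proof of Theorem~\ref{main_theorem}, inducting on $d = \dim X$ and substituting the relative versions of the main inputs. Since $K^\azu(f)$ is a Nisnevich sheaf of spectra, the Kerz--Strunk spectral sequence argument used there --- now combined with Corollary~\ref{relative_reduction_invariance} in place of Corollary~\ref{scheme_reduction_invariance} --- reduces us to the case that $X = \Spec R$ with $R$ a reduced Noetherian local ring; then $X$ is quasi-projective over the Noetherian ring $R$, and since a smooth affine morphism over a Noetherian base is automatically of finite type, $f$ is smooth and quasi-projective, as required by Corollary~\ref{relative_platification}. For the base case $d = 0$ the ring $R$ is then a field, so $X$ and $S$ are regular Noetherian schemes of finite Krull dimension over a local Artinian ring; Proposition~\ref{base_case} gives $K_j^\azu(X) = K_j^\azu(S) = 0$ for all $j < 0$, and the long exact sequence of the fiber sequence $K^\azu(f) \to K^\azu(X) \to K^\azu(S)$ yields $K_{-i}^\azu(f) = 0$ for $i > 1$.

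For the inductive step assume $d > 0$ and fix a class $\gamma \in K_{-i}^\azu(f)$ with $i > d+1$. As $i > \dim X$, Theorem~\ref{main_theorem} gives $K_{-i}^\azu(X) = 0$, so the long exact sequence exhibits $\gamma = \partial(\delta)$ for some $\delta \in K_{-i+1}^\azu(S)$; since $-i+1 < 0$, this is a negative twisted $K$-theory class on $S$. Corollary~\ref{relative_platification} applied to $\delta$ produces a projective birational morphism $\rho \colon \tilde X \to X$ with $\rho_S^*\delta = 0$ in $K_{-i+1}^\azu(\tilde X_S)$, where $\tilde X_S := \tilde X \times_X S$ and $\rho_S \colon \tilde X_S \to S$ is the base change of $\rho$ along $f$. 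Pulling the fiber sequence $K^\azu(f) \to K^\azu(X) \to K^\azu(S)$ back along $\rho$ yields a map of fiber sequences whose middle and right terms become those of the base-changed morphism $f_{\tilde X} \colon \tilde X_S \to \tilde X$; naturality of the connecting map then forces $\rho^*\gamma = \partial(\rho_S^*\delta) = 0$ in $K_{-i}^\azu(f_{\tilde X})$.

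To conclude that $\gamma = 0$, extend $\rho$ to an abstract blow-up square as in~(\ref{abs}), with closed subscheme $Y \hookrightarrow X$ over whose complement $\rho$ restricts to an isomorphism and with exceptional $E = \tilde X \times_X Y$; as $\rho$ is birational, $Y$ and $E$ have dimension $< d$. Flatness of $f$ makes the pullback of this square along $f$ another abstract blow-up square over $S$, and identifies the infinitesimal thickenings, $(Y_S)_n \cong (Y_n)_S$ and $(E_S)_n \cong (E_n)_S$. Applying Theorem~\ref{land_tamme} to the square over $X$ (with algebra $\azu$) and to the square over $S$ (with algebra $f^*\azu$), and then taking levelwise fibers of the map between the two pro-cartesian squares induced by the pullback functors along $f$, produces a pro-cartesian square
\[
  \begin{tikzcd}
    K^\azu(f) \ar[r, "{\rho^*}"] \ar[d] & K^\azu(f_{\tilde X}) \ar[d] \\
    \{K^\azu(f_{Y_n})\} \ar[r] & \{K^\azu(f_{E_n})\}
  \end{tikzcd}
\]
in which $f_{Y_n} \colon (Y_n)_S \to Y_n$ and $f_{E_n} \colon (E_n)_S \to E_n$ are smooth affine morphisms of Noetherian schemes whose bases have dimension $< d$. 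In the associated long exact sequence of pro-groups the terms $\{K_{-i+1}^\azu(f_{E_n})\}$, $\{K_{-i}^\azu(f_{Y_n})\}$ and $\{K_{-i}^\azu(f_{E_n})\}$ all vanish by the inductive hypothesis, since $i-1 \geq d+1$ exceeds both $\dim Y + 1$ and $\dim E + 1$; hence $\rho^* \colon K_{-i}^\azu(f) \to K_{-i}^\azu(f_{\tilde X})$ is an isomorphism, and therefore $\gamma = 0$. Since $\gamma$ was arbitrary, $K_{-i}^\azu(f) = 0$ for $i > d+1$.

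The step I expect to demand the most care is the last one: setting up pro-cdh descent for the relative invariant $K^\azu(f)$. Concretely, one must check that a flat pullback of an abstract blow-up square is again an abstract blow-up square with matching infinitesimal thickenings, and that the levelwise fiber of a map of pro-cartesian squares is pro-cartesian --- a formal $3 \times 3$-lemma argument relying on the stability of weak equivalences of pro-spectra under fibers. A secondary technical point is the chain of reductions in the first paragraph (that a smooth affine morphism over a Noetherian base is quasi-projective, that the induced morphism $f_{\mathrm{red}}$ stays smooth, and that the Kerz--Strunk coniveau argument applies to the Nisnevich sheaf $K^\azu(f)$ with the dimension bound raised by one), which is what makes Corollary~\ref{relative_platification} available at each stage of the induction.
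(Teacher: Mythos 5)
Your argument is correct and rests on the same essential inputs as the paper's proof --- the Kerz--Strunk reduction together with Corollary \ref{relative_reduction_invariance}, the relative platification statement of Corollary \ref{relative_platification}, the Land--Tamme pro-descent Theorem \ref{land_tamme}, and induction on $\dim X$ --- but you organize the induction around the relative groups where the paper organizes it around the absolute ones. The paper picks $\gamma \in K_{-i}^\azu(S)$ for $i > d$ (so it in effect proves the stronger assertion that $K_{-i}^\azu(S)$ itself vanishes whenever $i > \dim X$), kills it via Corollary \ref{relative_platification}, and applies Theorem \ref{land_tamme} a single time, to the pulled-back abstract blow-up square over $S$ (the back face of the cube it draws); the relative vanishing for $i > d+1$ then falls out of the fiber sequence combined with Theorem \ref{main_theorem}. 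You instead take $\gamma \in K_{-i}^\azu(f)$, use $K_{-i}^\azu(X) = 0$ to write $\gamma = \partial(\delta)$ with $\delta \in K_{-i+1}^\azu(S)$, kill $\delta$, and then must establish a pro-cartesian square for the relative invariant $K^\azu(f)$ by taking levelwise fibers of the two Land--Tamme squares over $X$ and over $S$. That step is formally sound in the stable setting (the fiber of a map of pro-cartesian squares is pro-cartesian, and flat affine base change preserves abstract blow-up squares and matches the infinitesimal thickenings, since $(\mathcal{I}\OO_S)^{n+1} = \mathcal{I}^{n+1}\OO_S$), but it is precisely the extra bookkeeping the paper's route avoids. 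Your base case ($R$ a field after reduction, $S$ regular, Proposition \ref{base_case} applied to both ends of the fiber sequence) and your dimension count for $Y$ and $E$ in the inductive step match what the paper does implicitly, so the two proofs are equally complete; yours is slightly longer but has the virtue that the object of the induction is literally the group in the statement of the theorem.
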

\begin{proof}
  The base case is covered by Proposition \ref{base_case} and our reductions are analagous to those
  in the proof of Theorem \ref{main_theorem}. So assume $X$ is a Noetherian reduced affine scheme of
  dimension $d$. Choose $\gamma \in
K_{-i}^\azu(S)$ with $i > d$. Using Corollary \ref{relative_platification}, construct a projective
birational morphism $\rho: \tilde{X} \rightarrow X$ that kills $\gamma$. We then build a morphism of
abstract blow-up squares
\[
  \begin{tikzcd}[column sep=small, row sep=small]
    D \ar[dd] \ar[rr] \ar[rd] & & \tilde{S} \ar[dd]\ar[rd] & \\
    & E \ar[rr] \ar[dd] & & \tilde{X} \ar[dd] \\
    V \ar[rr] \ar[rd] & & S \ar[rd] &\\
    & Y \ar[rr] & & X
  \end{tikzcd}
\]
By Theorem \ref{land_tamme}, we again get a long exact sequence of pro-groups corresponding to the back square
\[
    \begin{tikzcd}[column sep=small]
    \cdots \ar[r] & \{ K_{-i+1}^\azu(D_n)\} \ar[r]  & K_{-i}^\azu(S) \ar[r] & K_{-i}^\azu(\tilde{S}) \oplus \{ K_{-i}^\azu(V_n)\} \ar[r] & \{ K_{-i}^\azu(D_n)\} \ar[r] & \cdots
  \end{tikzcd}.
\]
When $i \geq \dim X +1$, every nonconstant pro-group vanishes by induction and we have an isomorphism $K_{-i}^\azu(S) \cong K_{-i}^\azu(\tilde{S})$ implying $\gamma = 0$.
\end{proof}
  \begin{rem} \label{counter_example}  The conditions on the morphism in Corollary \ref{relative_platification} are more
    general than those of Theorem \ref{relative_main_theorem}. We might hope to generalize
    Theorem \ref{relative_main_theorem} to a smooth quasi-projective or smooth projective map of Noetherian
    schemes. Although the induction
    step is present, both base cases fail. Consider the descent spectral sequence
\[
  E_2^{p, q} := H^p(X, \tilde{K_{q}}) \Rightarrow K_{q-p}(X) \text{ with }d_2 = (2, 1)
\]
If $\dim X \leq 3$, then
\[
  E_3^{2,1} = E_{\infty}^{2, 1} = \coker(H^0(X, \ZZ) \xrightarrow{d_2} H^2(X, \OO_X^*))
\]
contributes to $K_{-1}(X)$. The differential is zero as the edge morphism
\[
  \begin{tikzcd}
    K_0(X) \ar[r, two heads, "rank"] &     E_\infty^{0, 0}
  \end{tikzcd}
\]
identifies $E_\infty^{0,0}$ with the rank component of $K_0$, implying $E_2^{0,0} = E_\infty^{0,
  0}$. We now construct a family of examples for schemes $X$ with nontrivial $H^2(X, \OO_X^*)$. Let $X_{red}$ be quasi-projective smooth over a field $k$ and form the
cartesian diagram
\[
  \begin{tikzcd}
     X \ar[r, "f"] \ar[d] & X_{red} \ar[d] \\
    \Spec (k[t]/(t^2)) \ar[r] & \Spec k
  \end{tikzcd}.
\]
The pullback $X$ will be our counter-example. We have an isomorphism
\[
  \OO_{X}^* \cong g_*(\OO_{X_{red}}^*) \oplus g_*(\OO_{X_{red}})
\]
of sheaves of abelian groups on $X$ with $g: X_{red} \rightarrow X$ the pullback of the reduction morphism
$\Spec k \rightarrow \Spec k[t]/(t^2)$. Locally, $(R[t]/(t^2))^\times$ consists of all elements of the form $u + v\cdot t$ where $u \in R^\times$ and $v \in R$. Sheaf cohomology commutes with coproducts so this turns into an isomorphism
\[
  H^2(X, \OO_X^*) \cong H^2(X, g_*(\OO_{X_{red}}^*)) \oplus H^2(X, g_*(\OO_{X_{red}})) \cong
  H^2(X_{red}, \OO_{X_{red}}^*) \oplus H^2(X_{red}, \OO_{X_{red}}).
\]
Now the problem reduces to finding a surface or $3$-fold $X_{red}$ with nontrivial degree $2$ sheaf
cohomology. Take a smooth quartic in $\proj^3_k$ for a counter-example which is smooth and proper. Here
is a counter-example which is smooth and quasi-affine. Let $(A, \mm)$ be a 3-dimensional local ring
which is smooth over a field $k$. Take $X = \Spec A
\setminus \{\mm\}$ to be the punctured spectrum. Then $H^2(X, \OO_X) \cong H^3_\mm(A)$, which is the
injective hull of the residue field $A/\mm$.
\end{rem}


 \end{document}